\documentclass[12pt,A4,reqno]{amsart}
\usepackage{amsfonts}
\usepackage{mathrsfs}
\usepackage[T1]{fontenc}
\usepackage{amssymb}
\usepackage{pb-diagram}
\usepackage{amsmath,amscd}
\usepackage{upgreek}
\usepackage{mathabx}
\usepackage{color}
\usepackage{epsf}
\usepackage{graphicx}
\usepackage{xypic}

\theoremstyle{plain}
\newtheorem{thm}{Theorem}[section]
\newtheorem{lem}[thm]{Lemma}


\setlength{\oddsidemargin}{0.5cm} \setlength{\evensidemargin}{0.5cm}
\setlength{\textheight}{20cm} \setlength{\textwidth}{14.5cm}

\theoremstyle{definition}
\newtheorem{defi}[thm]{Definition}
\newtheorem{rem}[thm]{Remark}
\newtheorem{exam}{Example}


\newcommand{\R}{\mathbb R}
\newcommand{\Z}{\mathbb Z}

\newcommand{\mS}{\mathcal S}
\newcommand{\K}{\mathcal K}
\newcommand{\I}{\mathrm I}
\newcommand{\J}{\mathrm J}
\newcommand{\nn}{\vskip 0.2cm}
\newcommand{\n}{\vskip 0.1cm}

\begin{document}

\title [\ ] {On Hochster's formula for
         a class of quotient spaces of moment-angle complexes}

\author{Li Yu}
\address{Department of Mathematics and IMS, Nanjing University, Nanjing, 210093, P.R.China
 }
 \email{yuli@nju.edu.cn}
 \thanks{2010 \textit{Mathematics Subject Classification}. 57S15, 14M25, \\
 The author is partially supported by
 Natural Science Foundation of China (grant no.11371188) and the
 PAPD (priority academic program development) of Jiangsu higher education institutions.}


\keywords{Moment-angle complex, Hochster's formula, Tor algebra, balanced simple polytope}


\begin{abstract}
  Any finite simplicial complex $\mathcal{K}$ and a partition 
 of the vertex set of $\mathcal{K}$ determines a canonical quotient space 
 of the moment-angle complex of $\K$.
  We prove that the cohomology groups of such a space can be computed 
  via some Hochster's type formula, which generalizes the usual Hochster's
   formula for the cohomology groups of moment-angle complexes. In addition, we show that
  the stable decomposition of moment-angle complexes can also be 
  extended to such spaces. This type of spaces include 
  all the quasitoric manifolds that are pullback from the linear models.
 And we prove that
   the moment-angle complex associated to a finite simplicial poset is always homotopy
   equivalent to one of such spaces.
 \end{abstract}

\maketitle

 \section{Introduction} \label{Sec:Penal-Struct}
 
  An \emph{abstract simplicial complex} on a set $[m]=\{ v_1,\cdots, v_m\}$ 
  is a collection $\mathcal{K}$ of subsets $\sigma\subseteq [m]$ such that
  if $\sigma\in\mathcal{K}$, then any subset of $\sigma$ also belongs to $\mathcal{K}$.
  We always assume that the empty set belongs to $\mathcal{K}$ and refer
  to $\sigma\in \mathcal{K}$ as an abstract \emph{simplex} of $\mathcal{K}$.
  The simplex corresponding to the empty set is denoted by $\hat{\mathbf{0}}$.
  In particular, any element of $[m]$ is 
  called a \emph{vertex} of $\mathcal{K}$.  
  We call the number of
  vertices of a simplex $\sigma$ the \emph{rank} of $\sigma$, denoted by $\mathrm{rank}(\sigma)$.
  Let $\dim(\sigma)$ denote the dimension of a simplex $\sigma$.
   So $\mathrm{rank}(\sigma)=\dim(\sigma)+1$.\n
   
    Any finite abstract simplicial complex $\K$ admits a
  \emph{geometric realization} in some Euclidean space. 
  But sometimes we also use $\K$ to denote its geometric realization 
  when the meaning is clear in the context.\n

  Given a finite abstract simplicial complex $\mathcal{K}$ on a set $[m]$ and 
  a pair of spaces $(X,A)$ with $A\subset X$, we can construct of a topological space 
  $(X,A)^{\mathcal{K}}$ by:
    \begin{equation}\label{Equ:Construction}
     (X,A)^{\mathcal{K}}= \bigcup_{\sigma\in \mathcal{K}} (X,A)^{\sigma}, \ \text{where}\
      (X,A)^{\sigma}= \prod_{v_j\in \sigma} X \times \prod_{v_j\notin \sigma} A. 
    \end{equation}  
  The symbol $\prod$ here and in the rest of this paper means Cartesian product.  
  So $(X,A)^{\mathcal{K}}$ is a subspace of the Cartesian product of
  $m$ copies of $X$. It is called the \emph{polyhedral product}
   or the \emph{generalized moment-angle complex}
   of $\mathcal{K}$ and $(X,A)$. 
  In particular, $\mathcal{Z}_{\mathcal{K}}
    =(D^2,S^1)^{\mathcal{K}}$ and $\R\mathcal{Z}_{\mathcal{K}}
    =(D^1,S^0)^{\mathcal{K}}$ are called the 
  \emph{moment-angle complex} and \emph{real moment-angle complex} of $\mathcal{K}$,
   respectively (see~\cite{BP02}).
  Moreover, we can define the polyhedral product 
  $(\underline{\mathbb{X}},\underline{\mathbb{A}})^{\K}$ of $\K$ with $m$ pairs of spaces
  $(\underline{\mathbb{X}},\underline{\mathbb{A}}) = \{ (X_1,A_1),\cdots, (X_m,A_m) \}$ 
  (see~\cite{BBCG10} or~\cite[Sec 4.2]{BP15}).\n

    Originally, $\mathcal{Z}_{\mathcal{K}}$ and $\R\mathcal{Z}_{\mathcal{K}}$ were 
    constructed by Davis and Januszkiewicz~\cite{DaJan91} 
    in a different way. We will only explain the construction of $\mathcal{Z}_{\mathcal{K}}$
    below (the $\R\mathcal{Z}_{\mathcal{K}}$ case is completely parallel).    
   Let $\K'$ denote the barycentric subdivision of $\K$. We can consider
    $\K'$
   as the set of chains of simplices in $\K$ ordered by inclusions. 
     For each simplex
   $\sigma\in \K$, let $F_{\sigma}$ denote the geometric realization of the poset
   $\K_{\geq \sigma}=\{ \tau\in \K\,|\, \sigma\subseteq \tau \}$. Thus,  
    $F_{\sigma}$
   is the subcomplex of $\K'$ consisting of all simplices of the form
   $\sigma=\sigma_0 \subsetneq \sigma_1 \subsetneq \cdots \subsetneq \sigma_l$.    
    Let $P_{\K}$ denote the cone on $\K'$.   
    If $\sigma$ is a $(k-1)$-simplex, then we say that $F_{\sigma}\subset P_{\K}$ is a face 
    of codimension $k$ in $P_{\K}$. 
      The polyhedron $P_{\K}$ together
    with its decomposition into ``faces'' $\{ F_{\sigma} \}_{\sigma\in \K}$ 
    is called a \emph{simple polyhedral complex} (see~\cite[p.428]{DaJan91}).
     \n

   Let $V(\K)$ denote the vertex set of $\K$.
    Any map $\lambda: V(\K)\rightarrow \Z^r$ is
    called a \emph{$\Z^r$-coloring of $\K$}, and any element of $\Z^r$ is called a 
    \emph{color}. For any simplex $\sigma\in \K$,
   \begin{itemize}
     \item let $V(\sigma)$ denote the vertex set of 
   $\sigma$ and,\n
   \item let $G_{\lambda}(\sigma)$     
     denote the toral subgroup of $T^r=(S^1)^r$ corresponding to the subgroup of $\Z^r$
   generated by $\{ \lambda(v) \, |\, v\in V(\sigma) \}$. 
  \end{itemize} 
  
   Given a $\Z^r$-coloring $\lambda$ of $\K$,  we obtain a space $X(\K,\lambda)$ defined by
   \begin{equation}\label{Def:Complex-Quotient}
     X(\K,\lambda):= P_{\K}\times T^r \slash \sim 
   \end{equation}
  where $(p,g)\sim (p',g')$ whenever $p'=p\in F_{\sigma}$ and 
  $g'g^{-1}\in G_{\lambda}(\sigma)$
  for some $\sigma\in \K$.   \n
  
     In particular, if $r=|V(\K)|=m$ and $\{\lambda(v_j)\,; \, 1\leq i \leq m\}$ 
  is a basis of $\Z^m$,
  $X(\K,\lambda)$ is homeomorphic to $\mathcal{Z}_{\K}$.  
  Let $\pi_{\K}: P_{\K}\times T^m \rightarrow \mathcal{Z}_{\K}$ be the corresponding 
 quotient map in~\eqref{Def:Complex-Quotient}. 
 There is a \emph{canonical action} of $T^m$ on $\mathcal{Z}_{\K}$
  defined by:
  \begin{equation} \label{Equ:Canonical-Action}
        g'\cdot \pi_{\K}(p,g) = \pi_{\K} 
        (p,gg'),\ p\in P_{\K},\, g,g'\in T^m.
    \end{equation}
   Then any subgroup of $T^m$ acts canonically on $\mathcal{Z}_{\K}$ through this action.\n
  The following is another way to view the canonical $T^m$-action on
          $\mathcal{Z}_{\K}$. Recall 
       \begin{equation}\label{Equ:Construction-Complex-Moment_1}
     \mathcal{Z}_{\K}=(D^2,S^1)^{\mathcal{K}} = 
        \bigcup_{\sigma\in \mathcal{K}} \big( \prod_{v_j\in \sigma} D^2_{(j)} \times 
           \prod_{v_j\notin \sigma} S^1_{(j)}\big) \subset \prod_{v_j\in [m]} D^2_{(j)} 
    \end{equation}     
  where $D^2_{(j)}$ and $S^1_{(j)}$ are the copy of $D^2$ and $S^1$ associated to $v_j$.
  Notice that $ D^1_{(j)}=S^1_{(j)}*v_j$ (the \emph{join} of $S^1_{(j)}$ with $v_j$). 
  So we can write
    \begin{equation}\label{Equ:Construction-Complex-Moment_2}
     \mathcal{Z}_{\K}=(D^2,S^1)^{\mathcal{K}} = 
        \bigcup_{\sigma\in \mathcal{K}} \big( \prod_{v_j\in \sigma} S^1_{(j)}*v_j \times 
           \prod_{v_j\notin \sigma} S^1_{(j)}\big) 
    \end{equation}     
   We can identify $S^1_{(j)}$ with
  the $j$-th $S^1$-factor in $T^m=(S^1)^m$. 
   Then for any $(g_1,\cdots, g_m)\in T^m$, 
   let $g_j$ act on $S^1_{(j)}$ through
   left translations. This is equivalent to the canonical $T^m$-action
   on $\mathcal{Z}_{\K}$ defined by~\eqref{Equ:Canonical-Action}.
  \n
  
 For any map $\lambda: V(\K)\rightarrow \Z^r$ whose image spans the whole $\Z^r$, 
 we can view the space $X(\K,\lambda)$ in~\eqref{Def:Complex-Quotient} as a quotient space of 
   $\mathcal{Z}_{\K}$ by a toral sbugroup of $T^m$.
   Indeed, let $\{e_1,\cdots, e_m\}$ be a unimodular 
   basis of $\Z^m$ and define a group homomorphism
    \[   \rho_{\lambda}: \Z^m \longrightarrow \Z^r ,\ \rho_{\lambda}(e_j) = \lambda(v_j),\,
        1\leq i \leq m.   \] 
  The kernel of $\rho_{\lambda}$ is a subgroup of $\Z^m$ which determines an $(m-r)$-dimensional 
  toral subgroup $H_{\lambda} \subset T^m$.
         It is easy to see that $X(\K,\lambda)$ is homeomorphic to the quotient space 
          $\mathcal{Z}_{\K}\slash H_{\lambda}$, where $H_{\lambda}$ acts on 
          $\mathcal{Z}_{\K}$ via the canonical $T^m$-action.
       Note that the action of $H_{\lambda}$ on $\mathcal{Z}_{\K}$ 
       is not necessarily free.\n
  
    The cohomology groups of $\mathcal{Z}_{\K}$ can be computed via a Hochster's type
       formula as follows (see~\cite{BasKov02} or~\cite{BP02}). 
       For any subset $J\subset [m]$, let $\K_J$ denote the full 
       subcomplex of $\K$ obtained by restricting to $J$.
       Let $\mathbf{k}$ denote a field or $\Z$ below. We have    
    \begin{equation}\label{Equ:Hoch-complex}
          H^q(\mathcal{Z}_{\K};\mathbf{k}) = \bigoplus_{J\subset [m]} 
       \widetilde{H}^{q-|J|-1}(\K_J;\mathbf{k}), \ q\geq 0. 
     \end{equation}
      where $\widetilde{H}^*(\K_J;\mathbf{k})$ is the reduced cohomology groups of $\K_J$ and 
      $|J|$ denotes the number of elements in $J$. 
      Here we adopt the convention that
      $$\widetilde{H}^{-1}(\K_{\varnothing};\mathbf{k})=\mathbf{k}.$$
    Moreover, it is shown in~\cite{BasKov02} and~\cite{BP02} that there is a natural bigrading on
     $H^*(\mathcal{Z}_{\K};\mathbf{k})$ so that it is
    isomorphic to  $\mathrm{Tor}_{\mathbf{k}[v_1,\cdots, v_m]}(\mathbf{k}[\K];\mathbf{k})$
        as bigraded algebras, where $\mathbf{k}[\K]$ is the face ring (or Stanley-Reisner ring)
         of $\K$ over $\mathbf{k}$ (also see~\cite{Franz06}). \n
        
        The cohomology rings of free quotient spaces of 
        $\mathcal{Z}_{\K}$ can be reprsented in a similar way. 
        Suppose a subtorus $H\subset T^m$ acts freely 
      on $\mathcal{Z}_{\K}$ through the canonical action. It is shown in~\cite{Panov15}
       (also see~\cite[Theorem 7.37]{BP02}) that
      there is a graded algebra isomorphism
      \begin{equation}\label{Equ:Iso-Quotient}
       H^*(\mathcal{Z}_{\K}\slash H ;\mathbf{k}) \cong 
      \mathrm{Tor}_{H^*(B(T^m\slash H);\mathbf{k}} (\mathbf{k}[\K];\mathbf{k})  
      \end{equation}
      where $B(T^m\slash H)$ is the classifying space for the principal $T^m\slash H$-bundle. 
     However, $\mathrm{Tor}_{H^*(B(T^m\slash H);\mathbf{k}} (\mathbf{k}[\K];\mathbf{k})$
     is not so easy to compute in practice and 
     it is not clear whether there exists a Hochster's type formula for
      $H^*(\mathcal{Z}_P\slash H;\mathbf{k})$ as we have
      for $H^*(\mathcal{Z}_P;\mathbf{k})$ in~\eqref{Equ:Hoch-complex}.
        \n

 \begin{rem}
     For
   the calculation of the cohomology ring structure of general polyhedral products, 
   the reader is referred to~\cite{BBCG10, BBCG10-2, Wang-Zheng-13}. 
   \end{rem} \n
     
    An important class of quotient spaces of moment-angle complexes
     are quasitoric manifolds. Let $\K_P$ be the simplicial sphere that is 
     dual to an $n$-dimensional simple convex polytope $P$ with $m$ facets.
    Then $\mathcal{Z}_P = \mathcal{Z}_{\K_P}$ is an
      $(m+n)$-dimensional closed connected manifolds, called
       the \emph{moment-angle manifold} of $P$. Suppose $H\cong T^{m-n}$ is a 
       subgroup of $T^m$ that acts freely on
      $\mathcal{Z}_P$ through the canonical action, the quotient space $\mathcal{Z}_P\slash H$
      is called a \emph{quasitoric manifold} over $P$.
       Quasitoric manifolds are introduced 
       by Davis and Januszkiewicz in~\cite{DaJan91}. 
     \n

      In this paper, we study a special class of quotient spaces of 
      $\mathcal{Z}_{\K}$ and show that their cohomology groups can indeed be computed via
      some Hochster's type formula. These 
        spaces are defined as follows.
       \n
       \begin{itemize}
       \item Let $\upalpha=\{\alpha_1,\cdots, \alpha_k\}$ 
       be a \emph{partition} of the vertex set $V(\K)$
       of a simplicial complex $\K$, i.e. $\alpha_i$'s are disjoint subsets of $V(\K)$
       with $\alpha_1 \cup\cdots \cup \alpha_k=V(\K)$. \n
       
       \item Let $\{ \tilde{e}_1,\cdots, \tilde{e}_k\}$ be a basis of $\Z^k$. We define
      a $\Z^k$-coloring of $\K$, denoted by $\lambda_{\upalpha}$, which
       assigns $\tilde{e}_i$ to all the vertices in $\alpha_i$ ($1\leq i \leq k$).
      \end{itemize}  
            
       Then we obtain a space $X(\K,\lambda_{\upalpha})$ via
        construction~\eqref{Def:Complex-Quotient}, which can be thought of as a 
        quotient space of $\mathcal{Z}_{\K}$ by a rank $m-k$ subtorus 
        $H_{\lambda_{\upalpha}}$ of $T^m$.
       Note that it is possible that the two vertices of a $1$-simplex in $\K$ are assigned 
       the same color (see Figure~\ref{p:Example} for example). 
        \n
          \begin{figure}
         \includegraphics[width=0.85\textwidth]{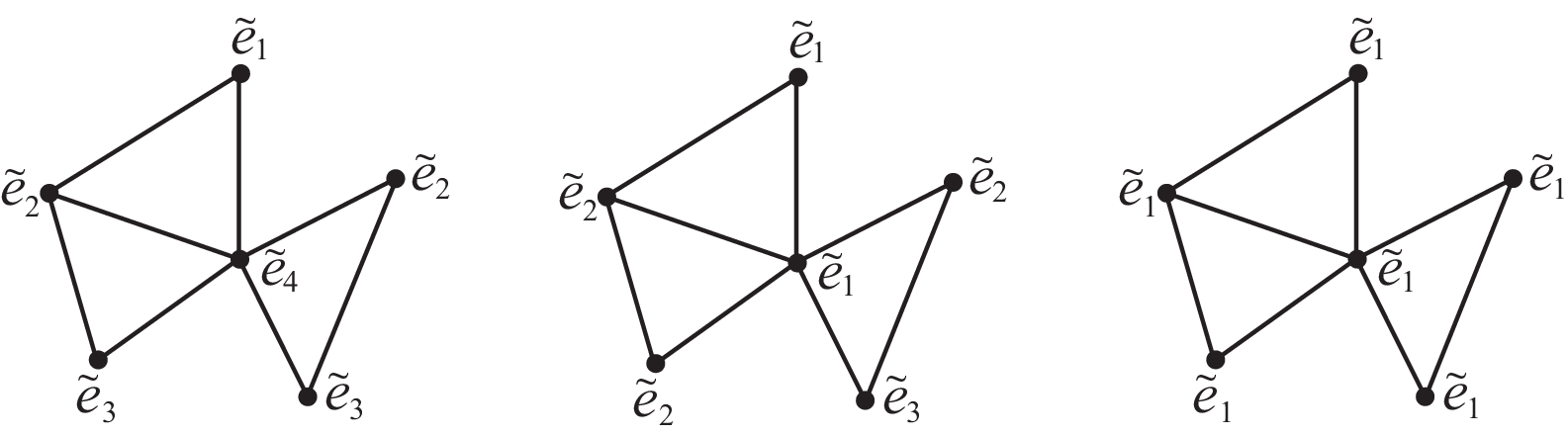}\\
          \caption{Examples of $X(\K,\lambda_{\upalpha})$} 
          \label{p:Example}
      \end{figure}

         Let $\upalpha^*$ denote the \emph{trivial partition}
       of $V(\K)$, i.e. $\upalpha^*=(\alpha_1,\cdots, \alpha_m)$
      where each $\alpha_j=\{ v_j\}$ consists of only one vertex of $\K$. Then according to
       our definition, 
        $$X(\K,\lambda_{\upalpha^*}) = \mathcal{Z}_{\K}.$$
        
       For a non-trivial partition $\upalpha$ of $V(\K)$,    
        the space $X(\K,\lambda_{\upalpha})$ is a priori not a moment-angle complex of any kind.
       But we will see that some topological 
       properties of $X(\K,\lambda_{\upalpha})$ are very similar to
       moment-angle complexes. In particular, the
       cohomology groups of these spaces
       can also be computed by some Hochster's type formula as we do for 
       moment-angle complexes. \n
       
     Let $[k]=\{ 1,\cdots, k\}$. One should keep in mind the difference
     between $[k]$ and the vertex set $[m]$ of $\K$.
     For any simplex $\sigma\in \K$, let
        $$ \I_{\upalpha}(\sigma) := \{ i\in [k] \,
                  ;\, V(\sigma)\cap \alpha_i \neq \varnothing \}
     \subset [k],$$
    which just tells us the set of colors 
    on the vertices of $\sigma$ defined by $\lambda_{\alpha}$.
     Obviously we have
     $0\leq |\I_{\upalpha}(\sigma)| \leq \mathrm{rank}(\sigma)$.
     For any subset $\mathrm{L}\subset [k]$, define
    \begin{equation} \label{Equ:K-Subcomplexes}
        \K_{\upalpha, \mathrm{L}}
      := \text{the subcomplex of $\K$ consisting of $\{ \sigma\in \K\,;\, 
     \I_{\upalpha}(\sigma)\subset \mathrm{L} \}$}.
    \end{equation}

      The main results of this paper are the following two theorems.
      
      \begin{thm}\label{Thm:Main-1}
       Let $\upalpha=\{\alpha_1,\cdots, \alpha_k\}$ 
        be a partition of the vertex set of a finite simplicial complex $\K$. Then we have
        group isomorphisms:
       $$ H^q(X(\K, \lambda_{\upalpha});\mathbf{k}) \cong 
          \underset{\mathrm{L}\subset [k]}{\bigoplus} \widetilde{H}^{q-|\mathrm{L}|-1} 
          (\K_{\upalpha, \mathrm{L}};\mathbf{k}), \, \forall q\geq 0.$$ 
      \end{thm}  
   
     According to the isomorphism in~\eqref{Equ:Iso-Quotient},
     the above formula gives us a way to compute 
     $\mathrm{Tor}_{H^*(B(T^m\slash H);\mathbf{k}} (\mathbf{k}[\K];\mathbf{k})$ in
     the setting of Theorem~\ref{Thm:Main-1}. 
      Note that the above formula for $\upalpha^*$
    gives~\eqref{Equ:Hoch-complex}. \n
     
      In addition,
    it was shown in~\cite[Corollary 2.23]{BBCG10}
       that the Hochster's formula for the
      cohomology groups of $\mathcal{Z}_{\K}$ follows from a 
      stable decomposition of $\mathcal{Z}_{\K}$. We have parallel 
      results for $X(\K,\lambda_{\upalpha})$ as well.\n
      
       \begin{thm}\label{Thm:Stable-Decomp}
      Let $\upalpha=\{\alpha_1,\cdots, \alpha_k\}$ 
        be a partition of the vertex set of a finite simplicial complex $\K$.
          There are homotopy equivalences:
         $$  
           \mathbf{\Sigma}( X(\K,\lambda_{\upalpha}) ) \simeq \bigvee_{\mathrm{L}\subset [k]} 
           \mathbf{\Sigma}^{|\mathrm{L}|+2}( \K_{\upalpha,\mathrm{L}})$$
          where the bold $\mathbf{\Sigma}$ denotes the reduced suspension. 
       \end{thm}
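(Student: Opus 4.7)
The approach is to adapt the Bahri--Bendersky--Cohen--Gitler (BBCG) stable splitting for polyhedral products (cf.~\cite[Cor 2.23]{BBCG10}) to the non-free quotients $X(\K,\lambda_\upalpha)$ and $X(\K,\Lambda_\upalpha)$. When $\upalpha=\upalpha^*$ is the trivial partition, the statement specializes to the known BBCG decompositions of $\R\mathcal{Z}_\K$ and $\mathcal{Z}_\K$, which serves as a consistency check.

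First I would present both spaces as colimits over the face poset of $\K$: $X(\K,\lambda_\upalpha)=\mathrm{colim}_{\sigma\in\K}\bigl(F_\sigma \times (\Z_2)^k / G_{\I_\upalpha(\sigma)}\bigr)$ and $X(\K,\Lambda_\upalpha)=\mathrm{colim}_{\sigma\in\K}\bigl(F_\sigma \times T^k / \widehat{G}_{\I_\upalpha(\sigma)}\bigr)$, the transition maps being induced by the inclusions $F_\tau\supseteq F_\sigma$ for $\tau\subseteq\sigma$ and the surjections $(\Z_2)^k/G_{\I_\upalpha(\tau)}\twoheadleftarrow(\Z_2)^k/G_{\I_\upalpha(\sigma)}$ (respectively for tori). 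This is the direct analogue of the usual polyhedral-product presentation of $\R\mathcal{Z}_\K$ and $\mathcal{Z}_\K$, and makes the face stratification of $P_\K$ available for the subsequent analysis.

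Next, for each non-empty $L\subseteq[k]$ I would construct collapse maps $c_L^{\mathbb R}\colon X(\K,\lambda_\upalpha)\to\mathbf{\Sigma}|\K_{\upalpha,L}|$ and $c_L^{\mathbb C}\colon X(\K,\Lambda_\upalpha)\to\mathbf{\Sigma}^{|L|+1}|\K_{\upalpha,L}|$ as follows: pick a coset representative in $(\Z_2)^k/G_L$ (respectively in $T^k/\widehat{G}_L$), collapse every stratum $F_\sigma\times[g]$ with $\I_\upalpha(\sigma)\not\subseteq L$ or with $[g]$ outside the chosen coset to the basepoint, and identify what remains with $\mathbf{\Sigma}|\K_{\upalpha,L}|$ (resp.\ $\mathbf{\Sigma}^{|L|+1}|\K_{\upalpha,L}|$) via the cone structure on $P_\K$. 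The extra $\mathbf{\Sigma}^{|L|}$ in the complex case comes from smashing with $\widehat{G}_L\cong T^{|L|}$. Assembling the $c_L$'s produces a map to the product, and after one suspension a BBCG-type stable splitting lemma upgrades this product to the desired wedge. To check that the assembled map $\mathbf{\Sigma} X(\K,\lambda_\upalpha)\to\bigvee_L\mathbf{\Sigma}^2|\K_{\upalpha,L}|$ (and its complex analogue) is a homotopy equivalence, I would appeal to Theorem~\ref{Thm:Main-1}: the construction is engineered so that the induced map on integral cohomology is exactly the Hochster splitting, and since both sides are simply-connected suspensions, Whitehead's theorem closes the argument.

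The hard part will be the absence of freeness for the relevant group actions: the BBCG splitting maps on $\R\mathcal{Z}_\K$ and $\mathcal{Z}_\K$ cannot simply be pushed down by $H_{\lambda_\upalpha}$- or $S_{\Lambda_\upalpha}$-invariance, so the $c_L$ have to be built directly from the face stratification of $P_\K$ and the coset structure of $(\Z_2)^k$ or $T^k$, with careful attention to compatibility as $\sigma$ and $L$ vary. A secondary technicality is the $L=\varnothing$ summand: since $|\K_{\upalpha,\varnothing}|$ is empty, its iterated suspension must be interpreted as a basepoint to keep the wedge consistent with the convention $\widetilde{H}^{-1}(\K_\varnothing;\mathbf{k})=\mathbf{k}$ underlying Theorem~\ref{Thm:Main-1}.
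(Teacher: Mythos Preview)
Your approach is different from the paper's, and while it could in principle be made to work, the sketch has a real gap at the step you flag as ``identify what remains with $\mathbf{\Sigma}|\K_{\upalpha,L}|$''.

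\textbf{What the paper does.} The paper does not build collapse maps and then invoke Theorem~\ref{Thm:Main-1} plus Whitehead. Instead it relies on the product description (equations (18)--(21)): $X(\K,\lambda_\upalpha)=\mathrm{colim}_{\sigma\in\K}\,\mathbf{D}_\upalpha(\sigma)$ with $\mathbf{D}_\upalpha(\sigma)=\prod_{i\in\I_\upalpha(\sigma)} S^0_{(i)}*(\sigma\cap\Delta^{\alpha_i})\times\prod_{i\notin\I_\upalpha(\sigma)} S^0_{(i)}$, and similarly with $S^1$ for $X(\K,\Lambda_\upalpha)$. To each such finite product it applies the natural BBCG splitting $\mathbf{\Sigma}(\prod_i Y_i)\simeq\mathbf{\Sigma}\bigl(\bigvee_{L}\bigwedge_{i\in L}Y_i\bigr)$ termwise, commutes $\mathbf{\Sigma}$ with the colimit, and then simplifies the resulting colimit of wedges of smash products $\mathbf{W}^{S^d}_{\upalpha,L}(\sigma)$ by two purely homotopy-theoretic observations: (a) the inclusion $S^d\hookrightarrow S^d*\Delta^r$ is null-homotopic, so the diagram of smash pieces over $\mathrm{Cat}(\K_{\upalpha,L})$ has constant transition maps and collapses to a wedge indexed by simplices (Lemma~\ref{Lem:Wedge-Equiv}, via~\cite[Thm~4.1--4.2]{BBCG10}); (b) each $\mathbf{W}^{S^d}_{\upalpha,L}(\sigma)$ is contractible unless $\sigma=\hat{\mathbf{0}}$ (Lemma~\ref{Lem:W-Simplex-Equiv}), leaving only $|\K_{\upalpha,L}|*\bigwedge_{i\in L}S^d_{(i)}$. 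No appeal to cohomology is needed; in fact the paper remarks that Theorem~\ref{Thm:Main-1} is a \emph{consequence} of Theorem~\ref{Thm:Stable-Decomp}, so its proof is deliberately independent of the Hochster formula.

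\textbf{Where your sketch is incomplete.} Your description of $c_L$ (``collapse every stratum $F_\sigma\times[g]$ with $\I_\upalpha(\sigma)\not\subseteq L$ or with $[g]$ outside the chosen coset'') does not obviously land in $\mathbf{\Sigma}|\K_{\upalpha,L}|$; what remains is a quotient of $P_{\K_{\upalpha,L}}\times G_L/\!\sim$, and recognising that as a single suspension is precisely the content of Lemma~\ref{Lem:Wedge-Equiv} together with Lemma~\ref{Lem:W-Simplex-Equiv}. In other words, to make your collapse maps well-defined with the stated target you would end up reproducing the paper's key lemmas anyway. Once you have those, the detour through Theorem~\ref{Thm:Main-1} and Whitehead is unnecessary --- the BBCG machinery already gives the homotopy equivalence directly. (Your logical order is permissible, since Section~3 proves Theorem~\ref{Thm:Main-1} by an independent cochain argument, but it is the reverse of the paper's.) A minor point: for Whitehead you also need $X(\K,\lambda_\upalpha)$ connected, which is true but should be checked.
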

         
      The paper is organized as follows. In section 2, we construct some
      natural cell decomposition of $X(\K,\lambda_{\upalpha})$ and use 
      it to
       compute the the cohomology groups of $X(\K,\lambda_{\upalpha})$, which leads to
       a proof of
        Theorem~\ref{Thm:Main-1}. In section 3, we use
        the same strategy in~\cite{BBCG10} to study the stable decompositions of
         $X(\K,\lambda_{\upalpha})$ and give a proof of Theorem~\ref{Thm:Stable-Decomp}.
         In section 4, we show that the
         moment-angle complex of any finite 
         simplicial poset $\mathcal{S}$ is homotopy equivalent to
        $X(\K,\lambda_{\upalpha})$ for some finite 
        simplicial complex $\mathcal{K}$ and a partition
        $\upalpha$ of $V(\mathcal{K})$.
       In section 5, we generalize our results on $X(\K,\lambda_{\upalpha})$
       to a wider range of spaces.\\

     \section{Cohomology groups of $X(\K,\lambda_{\upalpha})$}
   
     Suppose the vertex set of $\K$ is $[m]=\{ v_1,\cdots, v_m\}$. Let 
     $\Delta^{[m]}$ be the simplex with vertex set $[m]$. For a partition
     $\upalpha=\{ \alpha_1,\cdots, \alpha_k \}$ of $[m]$, let
     $\Delta^{\alpha_i}$ denote the face of $\Delta^{[m]}$ whose 
     vertex set is $\alpha_i$. Then $\K$ can be thought of as 
      a simplicial subcomplex of $\Delta^{[m]}$. 
      Next, we construct a cell decomposition of $X(\K,\lambda_{\upalpha})$.\nn

   \subsection{The cell decomposition of $X(\K,\lambda_{\upalpha})$}\ \n
   
     According to the construction of $X(\K,\lambda_{\upalpha})$ in~\eqref{Def:Complex-Quotient}, 
     it is easy to see that $X(\K,\lambda_{\upalpha})$ is homeomorphic to
        the quotient space
       of $\mathcal{Z}_{\K}$ by the canonical action of the 
       toral subgroup $H_{\lambda_{\upalpha}}$ of $T^m$ corresponding to
       the subgroup of $\Z^m = \langle e_1,\cdots, e_m\rangle$ generated by the set
       $$\{ e_{j} - e_{j'} \,|\, v_{j}, v_{j'} \in \upalpha_i \ \text{for some}\ 
            1\leq i \leq k \}  \subset \Z^m.$$
      In other words,
      the action $H_{\lambda_{\upalpha}}$ on $\mathcal{Z}_{\K}=P_{\K}\times T^m \slash \sim$ identifies the
      $S^1_{(j)}$ and $S^1_{(j')}$ in $T^m=S^1_{(1)}\times \cdots \times S^1_{(m)}$
       whenever $v_j$ and $v_{j'}$ belong
       to the same $\alpha_i$.
       Considering the partition $\upalpha$ of the vertex set of $\K$,
        we can rewrite the decomposition of $\mathcal{Z}_{\K}$ 
      in~\eqref{Equ:Construction-Complex-Moment_2} as:
       \begin{equation}\label{Equ:Construction-Complex-Moment_3}
     \mathcal{Z}_{\K}= 
        \bigcup_{\sigma\in \mathcal{K}} \Big(  \big( \prod_{i\in \mathrm{I}_{\upalpha}(\sigma)} 
        \prod_{v_j\in V(\sigma)\cap \alpha_i} S^1_{(j)}*v_j \big)\times 
           \prod_{v_j\notin \sigma} S^1_{(j)}\Big).
    \end{equation}  
    Then with respect to this decomposition of $ \mathcal{Z}_{\K}$,
    we obtain a decomposition of 
    $X(\K,\lambda_{\upalpha})$ by Lemma~\ref{Lem:Quotient-Join} below.
       \begin{align}\label{Equ:Decomp-Quotient}
       X(\K,\lambda_{\upalpha}) 
       &= \bigcup_{\sigma\in \K} \Big(
       \prod_{i\in \mathrm{I}_{\upalpha}(\sigma)} 
       \big(S^1_{(i)} * \underset{v\in V(\sigma)\cap \alpha_i}{\Asterisk} v \big)\times
        \prod_{i\in [k]\backslash \mathrm{I}_{\upalpha}(\sigma)} S^1_{(i)} \Big) \notag \\
       &=  \bigcup_{\sigma\in \K} \Big(
       \prod_{i\in \mathrm{I}_{\upalpha}(\sigma)} S^1_{(i)}*(\sigma\cap \Delta^{\alpha_i}) 
        \times \prod_{i\in [k]\backslash \mathrm{I}_{\upalpha}(\sigma)} S^1_{(i)} \Big)
         \subset  \prod_{i\in [k]} S^1_{(i)}* \Delta^{\alpha_i} 
       \end{align} 
      where $S^1_{(i)}$ is a copy of $S^1$ corresponding to $i\in [k]$, which can be considered
      as the join of $S^1$ with the empty face of $\Delta^{\alpha_i}$. \n
            
     \begin{lem}\label{Lem:Quotient-Join}
     If we identify all the $S^1$ factors in a product 
      $(S^1*v_1)\times\cdots \times(S^1*v_s)$, 
      the quotient space $(S^1*v_1)\times\cdots \times(S^1*v_s)\slash \sim$ is homeomorphic to
      $S^1*(v_1*\cdots * v_s)$. Note that $v_1*\cdots * v_s$ can be identified with a
      simplex whose vertex set is $\{v_1,\cdots, v_s\}$. 
    \end{lem}  
    \begin{proof}
     The points in $(S^1*v_1)\times\cdots \times(S^1*v_s)$ can be written as
     \[ \big( (t_1v_1 + (1-t_1)x_1),\cdots, (t_sv_s + (1-t_s)x_s) \big), \, x_i\in S^1,
      0\leq t_i \leq 1, 1\leq i \leq s. \]
    In we identify all the $S^1$ factors in the above product, the points in the quotient space
    can be written as 
    \[ P_{t_1,\cdots, t_s,x} = \big( (t_1v_1 + (1-t_1)x),\cdots, 
       (t_sv_s + (1-t_s)x) \big), \, x\in S^1,
      0\leq t_i \leq 1, 1\leq i \leq s. \] 
    Then mapping any $P_{t_1,\cdots, t_s,x}$ to the point $\hat{P}_{t_1,\cdots, t_s,x}$ 
    in $S^1*(v_1*\cdots * v_s)$ below
      \[ \hat{P}_{t_1,\cdots, t_s,x} =  \frac{t_1}{s}v_1 +\cdots +\frac{t_s}{s}v_s +
          \frac{1-t_1-\cdots-t_s}{s} x \]
    defines a homeomorphism from $(S^1*v_1)\times\cdots \times(S^1*v_s)\slash \sim$ 
    to $S^1*(v_1*\cdots * v_s)$.
    \end{proof}
    
      \begin{rem}
    We see from~\eqref{Equ:Decomp-Quotient} that the building blocks of 
    $X(\K,\lambda_{\upalpha}) $ 
    are spaces obtained by mixtures of
     Cartesian products and joins of 
    some simple spaces (i.e. points and $S^1$). 
    The building blocks of polyhedral products $(X,A)^{\mathcal{K}}$, however, only involve
    Cartesian products of spaces. 
    In addition, we have \emph{polyhedral join} (see~\cite{Anton13}) and
    \emph{polyhedral smash product} (see~\cite{BBCG10})
     whose building blocks only involve joins and smash products, respectively.
   It should be interesting to study
     spaces whose building blocks involve mixtures of 
     Cartesian products, joins and smash products. 
 \end{rem}

    To obtain a cell decomposition of $X(\K,\lambda_{\upalpha})$ 
    from~\eqref{Equ:Decomp-Quotient}, we need to choose a cell
     decomposition of the torus $T^k$. 
   First of all, a circle $S^1=\{ z\in\mathbb{C}\,; |z|=1 \}$ has a natural
     cell decomposition $\{ e^0,e^1\}$ where $e^0=\{1\}\in S^1$ and $e^1=S^1\backslash e^0$.
    We consider $T^k$ as the product $\prod^k_{i=1}S^1_{(i)}$ and
     equip $T^k$ with the product cell structure (see~\cite[3.B]{Hatcher}).
    Then the cells in $T^k$ can be indexed by subsets of $[k]=\{ 1,\cdots, k\}$. 
     More specifically, any subset 
     $\mathrm{L}\subset [k]$ determines a unique cell $U_{\mathrm{L}}$ in $T^k$ where
     \[ U_{\mathrm{L}} =  \prod_{i\in \mathrm{L}} e^1_{(i)} \times 
       \prod_{i\in [k]\backslash \mathrm{L}} e^0_{(i)},\ 
     \dim(U_{\mathrm{L}})=|\mathrm{L}|. \]
     Here $ e^0_{(i)}, e^1_{(i)}$ denote the cells in $S^1_{(i)}$ for each
     $i\in [k]$. \n
     
     Observe that for any $\sigma\in \K$, 
      $\prod_{i\in \mathrm{I}_{\upalpha}(\sigma)} S^1_{(i)}*(\sigma\cap \Delta^{\alpha_i})$
      is homeomorphic to a closed ball of dimension 
      $\mathrm{rank}(\sigma)+|\I_{\upalpha}(\sigma)|$. Now for each $\mathrm{L} \subset [k]\backslash \mathrm{I}_{\upalpha}(\sigma)$, define
       $$ B_{(\sigma,\mathrm{L})} := \text{the relative interior of}\ 
    \Big( \prod_{i\in \mathrm{I}_{\upalpha}(\sigma)} 
        S^1_{(i)}*(\sigma\cap \Delta^{\alpha_i}) \Big)\times U_{\mathrm{L}}.$$  
      Then from~\eqref{Equ:Decomp-Quotient}, 
      a cell decomposition of $X(\K,\lambda_{\upalpha})$ is given by:
    \begin{equation} \label{Equ:Cell-Decomp-Complex}
       \mathscr{B}_{\upalpha}(\K):=\{ B_{(\sigma,\mathrm{L})} \, |\, \sigma\in \K,\
    \mathrm{L} \subset [k]\backslash \mathrm{I}_{\upalpha}(\sigma)\}    
   \end{equation} 
      Note that $B_{(\sigma,\mathrm{L})}$
      is an open cell of dimension
    $\mathrm{rank}(\sigma)+|\I_{\upalpha}(\sigma)|+|\mathrm{L}|$. \nn

   \subsection{The cochain complex of $X(\K,\lambda_{\upalpha})$}
  
   \ \n
   
   For any coefficient ring $\mathbf{k}$, 
   let $C^*(X(\K,\lambda_{\upalpha});\mathbf{k})$
       be the 
     cellular cochain complex corresponding to the
      cell decomposition $\mathscr{B}_{\upalpha}(\K)$.
      If we want to write the boundary maps of the cochains
      in $C^*(X(\K,\lambda_{\upalpha});\mathbf{k})$,
       we need to put orientations on
        the base elements.  To do this, we need to first assign
         orientations to all the simplicies of $\K$. 
       For convenience, we put a total 
      ordering $\prec$ on the vertex set 
      $\{v_1,\cdots, v_m\}$ of $\Delta^{[m]}$ so that they appear in the increasing 
      order in $\alpha_1$ until $\alpha_k$.
      In other words, for any $1\leq i < k$ all the vertices of $\Delta^{\alpha_i}$ have less
      order than the vertices in $\Delta^{\alpha_{i+1}}$. 
      Moreover, the vertex-ordering of $\Delta^{[m]}$ induces a vertex-ordering
      of any simplex $\omega\in \Delta^{[m]}$, 
       which determines an orientation of $\omega$. Then the boundary of
       $\omega$ is
     \begin{equation}\label{Equ:Sign-Boundary}
        \partial \omega =\underset{\dim(\sigma)=\dim(\omega)-1}{\sum_{\sigma\subset \omega}}
          \varepsilon(\sigma,\omega)\, \sigma . 
      \end{equation}    
      Here if $V(\omega)=V(\sigma)\cup \{ v\}$, then
    $\varepsilon(\sigma,\omega)$ is equal to 
    $(-1)^{l(v,\omega)}$ where $l(v,\omega)$ is the number 
    vertices of $\omega$ that are less than $v$ with respect to the vertex-ordering $\prec$.\n
      
         The following definition is very useful for us to simplify our argument later.
         
    \begin{defi}[Simplex with a ghost face] \label{Def:Simplex-Ghost-Face}
    For any $m\geq 1$, let $\hat{0}$ denote the empty face of 
    $\Delta^{[m]}$ (distinguished from the empty simplex
    $\hat{\mathbf{0}}$ in $\K$).
     In addition, we attach a
      \emph{ghost face} $-\hat{1}$ to $\Delta^{[m]}$ with 
     the following conventions.
      \begin{itemize}        
        \item $\dim(\hat{0})=\dim(-\hat{1})= -1$, $\mathrm{rank}(\hat{0})=
      \mathrm{rank}(-\hat{1}) = 0$. \n
      
        \item The interiors of $\hat{0}$ and $-\hat{1}$ are themselves. \n
        
        \item The boundary of any vertex of $\Delta^{[m]}$ is $\hat{0}$.\n
        
        \item The boundaries of $\hat{0}$ and $-\hat{1}$ are empty, \n
      \end{itemize}
     In the rest of the paper we use $\widehat{\Delta}^{[m]}$ to 
     denote $\Delta^{[m]}$ with the ghost 
     face $-\hat{1}$.
     
     \end{defi}

 Let $\{ e^0, e^1\}$ be the cell decomposition of $S^1$ where
 $\dim(e^0)=0$, $\dim(e^1)=1$, and $e^0$, $e^1$ are both oriented.
  Then given an orientation of each face of $\Delta^{[m]}$,
     we obtain an oriented cell decomposition of $S^1*\Delta^{[m]}$ by
     $$\{e^0\},\ \{e^1\}, 
      \ \{ S^1 * \sigma^{\circ}\,|\, \sigma \ \text{is a nonempty simplex in}\ \Delta^{[m]} 
       \}.$$ 
        If we formally define $S^1*\hat{0} = e^1$ and
     $S^1* -\hat{1} = e^0$, we can write a basis of 
     the cellular chain complex $C_*(S^1*\Delta^{[m]};\mathbf{k})$
    as $ \{ S^1*\sigma^{\circ} \, |\, \sigma \in \widehat{\Delta}^{[m]}\}$ where
    the orientation of $S^1*\sigma^{\circ}$ 
    is determined canonically by the orientations of $S^1$ and
    $\sigma$. \n
       
    Let $\{ y^{\sigma}\, |\, \sigma \in \widehat{\Delta}^{[m]} \}$ be a basis 
     for the cellular cochain complex $C^*(S^1*\Delta^{[m]};\mathbf{k})$, 
     where $y^{\sigma}$ is the dual of $S^1*\sigma^{\circ}$. 
      For any nonempty simplex $\sigma$ in $\Delta^{[m]}$, 
     \begin{equation} \label{Equ:Diff-Complex-0}
         d(y^{\sigma}) = \underset{\dim(\tau)=\dim(\sigma)+1}{\sum_{\sigma\subset \tau}} 
               \varepsilon{(\sigma,\tau})\cdot y^{\tau}.
      \end{equation}        
       In addition, we have
     \begin{equation} \label{Equ:Diff-Complex-0-0}
        d(y^{-\hat{1}})=0,\ \ d(y^{\hat{0}}) = \sum_{v\in [m]} y^{v}. 
      \end{equation}  
      Note that $y^{-\hat{1}}$ and $y^{\hat{0}}$ are cochains in dimension $0$ and $1$, 
      respectively.\n
     
      By setting the $\sigma$ in~\eqref{Equ:Decomp-Quotient} to the big simplex $\Delta^{[m]}$,
      we obtain a homeomorphism  
      $$X(\Delta^{[m]},\lambda_{\upalpha})
      \cong \prod_{i\in [k]} S^1*\Delta^{\alpha_i}.$$ 
      Let $X(\Delta^{[m]},\lambda_{\upalpha})$ be equipped with the 
      product cell structure of each
      $S^1*\Delta^{\alpha_i}$.       
      The corresponding cellular cochain complex $C^*(X(\Delta^{[m]},\lambda_{\upalpha});
      \mathbf{k})$ has a basis
       $$\{ \mathbf{y}^{\Phi} = y^{\sigma_1}\times \cdots \times y^{\sigma_k}; \   
       \Phi =(\sigma_1,\cdots, \sigma_k), \ \text{where}\ 
       \sigma_i\in \widehat{\Delta}^{\alpha_i},\, 1\leq i \leq k \}.$$ 
    We need to introduce two more notations for our argument below.               
    \begin{itemize}
    \item For any vertex $v$ of $\K$ and a partition $\upalpha=\{ \alpha_1,\cdots,\alpha_k\}$
     of $V(\K)$,
     let $i_{\upalpha}(v) \in [k]$ denote the index so that $v$ belongs to
     the subset $\alpha_{i_{\upalpha}(v)}$.\n 
    
   \item  For any $i\in \mathrm{L} \subset [k]$, define
     $\kappa(i,\mathrm{L})= (-1)^{r(i,\mathrm{L})}$, where
     $r(i,\mathrm{L})$ is the number of elements in $\mathrm{L}$ less than $i$. 
    Moreover, for any simplex $\sigma \in \K_{\upalpha, \mathrm{L}}$ we define               
    \begin{equation}\label{Equ:Kappa}
       \kappa(\sigma, \mathrm{L}) := \prod_{v\in V(\sigma)}
     \kappa(i_{\upalpha}(v), \mathrm{L}). 
   \end{equation}  
    \begin{equation} \label{Equ:Sign-Relation-1}
      \text{ So if $V(\omega)=V(\sigma)\cup \{v \}$, we have
    $\kappa(\omega, \mathrm{L})=\kappa(\sigma, \mathrm{L}) \cdot
      \kappa(i_{\upalpha}(v), \mathrm{L})$.}
    \end{equation}  
 \end{itemize}
     \n
     
  The differential of $\mathbf{y}^{\Phi}$ in $C^*(X(\Delta^{[m]},\lambda_{\upalpha});
      \mathbf{k})$ is given by:   
       \begin{equation} \label{Equ:Diff-complex-1}
      d(\mathbf{y}^{\Phi}) := \sum_{1\leq i \leq k} 
       \iota(\Phi,\sigma_i)\, y^{\sigma_1}\times \cdots \times d y^{\sigma_i} 
        \times \cdots \times y^{\sigma_k}, 
        \end{equation}
       where $\iota(\Phi,\sigma_i) =
        (-1)^{\sum^{i-1}_{l=1}\dim(y^{\sigma_l})}$.
         For a simplex $\sigma\in \Delta^{[m]}$ and $\J\subset [k]\backslash \I_{\upalpha}(\sigma)$,
     let 
     $$\Phi^{\J}_{\sigma}=(\sigma^{\J}_1,\cdots, \sigma^{\J}_k)$$
     \begin{equation} \label{Equ:Phi-sigma-J}
       \text{where}\ \,   \sigma^{\J}_i = \begin{cases}
          \sigma\cap \Delta^{\alpha_i},   &  
                        \text{ $i\in \I_{\upalpha}(\sigma)$; } \\
          \ \hat{0}\in \widehat{\Delta}^{\alpha_i},     &   \text{ $i\in \J$; } \\
          -\hat{1}\in \widehat{\Delta}^{\alpha_i},   &   \text{ $i\in  [k]\backslash 
              (\I_{\upalpha}(\sigma)\cup \J)$;} 
              \end{cases} \ 
           \end{equation} 
 So by our definition when $\Phi=\Phi^{\J}_{\sigma}=(\sigma^{\J}_1,\cdots, \sigma^{\J}_k)$,
 we have 
         $$\dim(y^{\sigma^{\J}_i})=
              \begin{cases}
          \mathrm{rank}(\sigma^{\J}_i)+1,   &   \text{ $i\in \I_{\upalpha}(\sigma)$; } \\
            \ 1,     &   \text{ $i\in \J$; } \\
          \  0,   &   \text{ $i\in  [k]\backslash 
              (\I_{\upalpha}(\sigma)\cup \J)$.} 
              \end{cases} 
         $$
 For $\Phi=\Phi^{\J}_{\sigma}$, the formula~\eqref{Equ:Diff-complex-1} reads:
     \begin{align} 
         d (\mathbf{y}^{\Phi^{\J}_{\sigma}})
           & \ = \sum_{1\leq i \leq k}  \iota(\Phi^{\J}_{\sigma},\sigma^{\J}_i)\, 
                y^{\sigma^{\J}_1}\times \cdots \times d y^{\sigma^{\J}_i} \times \cdots \times 
                y^{\sigma^{\J}_k} \notag \\ 
           & \overset{\eqref{Equ:Diff-Complex-0}}{=} 
           \sum_{1\leq i \leq k} \iota(\Phi^{\J}_{\sigma},\sigma^{\J}_i)\, 
                y^{\sigma^{\J}_1}\times \cdots \times \Big( 
                  \underset{\dim(\tau)=\dim(\sigma)+1}{\sum_{\sigma^{\J}_i\subset \tau \subset
                  \Delta^{\alpha_i}}} 
               \varepsilon{(\sigma^{\J}_i,\tau})\cdot y^{\tau} \Big) \times \cdots \times 
                y^{\sigma^{\J}_k} . \notag 
         \end{align}   
 Note that if $V(\omega)=V(\sigma)\cup \{ v\}$ where
 $v\in \alpha_i$, then $\omega^{\J}_i=\sigma^{\J}_i\cup \{ v\}$ and we have
  $$\kappa(i, \I_{\upalpha}(\sigma)\cup\J) \cdot 
    \varepsilon(\sigma,\omega)= \iota(\Phi^{\J}_{\sigma},\sigma^{\J}_i)\cdot
      \varepsilon(\sigma^{\J}_i,\tau).
  $$   
  This is because for each $i\in  [k]\backslash 
              (\I_{\upalpha}(\sigma)\cup \J)$, $\dim(y^{\sigma^{\J}_i}) = 0$. So we obtain
 \begin{equation} \label{Equ:Diff-complex-2}
      d (\mathbf{y}^{\Phi^{\J}_{\sigma}}) =        
            \underset{\dim(\omega)=\dim(\sigma)+1} 
            {\sum_{\sigma\subset \omega,\, \mathrm{I}_{\upalpha}(\omega)
                  \subset \mathrm{I}_{\upalpha}(\sigma)\cup \mathrm{J}}} 
             \kappa\big( i_{\upalpha}(\omega\backslash \sigma), \I_{\upalpha}(\sigma)\cup\J \big) 
             \cdot  \varepsilon(\sigma,\omega) \,
                 \mathbf{y}^{\Phi_{\omega}^{\mathrm{I}_{\upalpha}(\sigma)\cup \mathrm{J}\backslash 
                      \mathrm{I}_{\upalpha}(\omega)}}  
   \end{equation}                   
  where $\omega\backslash \sigma$ denote the only vertex of $\omega$ that is 
  not in $\sigma$.\nn
  
          For the simplicial complex $\K \subset \Delta^{[m]}$ and 
          any simplex $\sigma\in \K$, $\mathbf{y}^{\Phi^{\J}_{\sigma}}$ is
   a cochain of dimension $\mathrm{rank}(\sigma)+|\I_{\upalpha}(\sigma)|+|\J|$
    in $C^*(X(\K,\lambda_{\upalpha});\mathbf{k})$
  that is dual to the cell $B_{(\sigma,\J)}$
  (see~\eqref{Equ:Cell-Decomp-Complex}). 
     So $C^*(X(\K,\lambda_{\upalpha});\Z)$ has a basis 
      $$\{  \mathbf{y}^{\Phi^{\J}_{\sigma}}\,;\, \sigma\in\K,\ 
       \J\subset [k]\backslash \I_{\upalpha}(\sigma)\}.$$                  
    Note that for any $\sigma\in \K$, the differential $d (\mathbf{y}^{\Phi^{\J}_{\sigma}})$ 
    in $C^*(X(\K,\lambda_{\upalpha});\mathbf{k})$
     is only the sum of those terms 
     $\mathbf{y}^{\Phi_{\omega}^{\mathrm{I}_{\upalpha}(\sigma)\cup \mathrm{J}\backslash 
                      \mathrm{I}_{\upalpha}(\omega)}}$ on the right hand side 
                      of~\eqref{Equ:Diff-complex-2} with $\omega\in \K$.
                      \nn

   \begin{proof}[\textbf{Proof of Theorem~\ref{Thm:Main-1}}]\ \n
   
      For any subset $\mathrm{L}\subset [k]$, let 
      $C^{*,\mathrm{L}}(X(\K,\lambda_{\upalpha});\mathbf{k})$ denote the $\mathbf{k}$-submodule of
      $C^*(X(\K,\lambda_{\upalpha});\mathbf{k})$ generated by the following set  
          $$\{ B_{(\sigma, \mathrm{J})} \,|\, \mathrm{I}_{\upalpha}(\sigma)\cup 
            \mathrm{J} =\mathrm{L},\, \sigma\in\K,\, 
       \J\subset [k]\backslash \I_{\upalpha}(\sigma)\}.$$
       
   From the differential of $C^*(X(\K,\lambda_{\upalpha});\mathbf{k})$ 
   in~\eqref{Equ:Diff-complex-2}, we see that 
   $C^{*,\mathrm{L}}(X(\K,\lambda_{\upalpha});\mathbf{k})$ is actually a cochain subcomplex of
     $C^*(X(\K,\lambda_{\upalpha});\mathbf{k})$.
     We denote its cohomology groups by
     $H^{*,\mathrm{L}}(X(\K,\lambda_{\upalpha});\mathbf{k})$.      
      Then we have the following decompositions:
    \begin{equation} \label{Equ:Decomp-Complex}  
          H^*(X(\K,\lambda_{\upalpha});\mathbf{k})=\bigoplus_{\mathrm{L}\subset [k]}
        H^{*,\mathrm{L}}(X(\K,\lambda_{\upalpha});\mathbf{k}).
       \end{equation}
   For any subset $\mathrm{L}\subset [k]$, let $C^*(\K_{\upalpha, \mathrm{L}};\mathbf{k})$ 
   denote the simplicial cochain complex of $\K_{\upalpha, \mathrm{L}}$. 
   For any simplex $\sigma\in \K$, let
   $\sigma^*$ be the cochain dual to $\sigma$ in 
   $C^*(\K_{\upalpha, \mathrm{L}};\mathbf{k})$.
   Then we have the following isomorphism of $\mathbf{k}$-modules:
   \begin{align*} 
     \varphi^{\mathrm{L}}_{\upalpha}: C^*(\K_{\upalpha, \mathrm{L}};\mathbf{k}) 
       & \longrightarrow  C^{*,\mathrm{L}}(X(\K,\lambda_{\upalpha});\mathbf{k}) \\
     \sigma^*  \ & \longmapsto \   \kappa(\sigma, \mathrm{L})
     \, \mathbf{y}^{\Phi_{\sigma}^{\mathrm{L}\backslash \I_{\upalpha}(\sigma)}}
    \end{align*}
  Note
  $
    \dim(\mathbf{y}^{\Phi_{\sigma}^{\mathrm{L}\backslash \I_{\upalpha}(\sigma)}})=
     \mathrm{rank}(\sigma) + |\mathrm{L}|$.
  Moreover, $\varphi^{\mathrm{L}}_{\upalpha}$
    is actually a chain complex isomorphism. Indeed by the differential of 
    $C^*(X(\K,\lambda_{\upalpha});\mathbf{k})$ shown in~\eqref{Equ:Diff-complex-2}, 
        \begin{align*}
     d (\varphi^{\mathrm{L}}_{\upalpha}(\sigma^*)) & = 
    d\big( \kappa(\sigma, \mathrm{L})
     \, \mathbf{y}^{\Phi_{\sigma}^{\mathrm{L}\backslash \I_{\upalpha}(\sigma)}} \big) \\
     & =  \kappa(\sigma, \mathrm{L})    \underset{\dim(\omega)=\dim(\sigma)+1} 
            {\sum_{\sigma\subset \omega\in \K,\, \mathrm{I}_{\upalpha}(\omega)
                  \subset \mathrm{L}}} 
              \kappa( i_{\upalpha}(\omega\backslash \sigma), \mathrm{L}) 
              \cdot  \varepsilon(\sigma,\omega) \,
                 \mathbf{y}^{\Phi_{\omega}^{\mathrm{L}\backslash 
                      \mathrm{I}_{\upalpha}(\omega)}}  \\
     & =    \underset{\dim(\omega)=\dim(\sigma)+1} 
            {\sum_{\sigma\subset \omega\in \K,\, \mathrm{I}_{\upalpha}(\omega)
                  \subset \mathrm{L}}} 
             \kappa(\omega, \mathrm{L})  \cdot  \varepsilon(\sigma,\omega) \,
                 \mathbf{y}^{\Phi_{\omega}^{\mathrm{L}\backslash 
                      \mathrm{I}_{\upalpha}(\omega)}} 
     = \varphi^{\mathrm{L}}_{\upalpha}(d\sigma^*).        
   \end{align*}            
    The third ``$=$'' uses the relation
     $\kappa(\omega, \mathrm{L})=\kappa(\sigma, \mathrm{L})
      \cdot \kappa( i_{\upalpha}(\omega\backslash \sigma), \mathrm{L})$ 
      (see~\eqref{Equ:Sign-Relation-1}).   
    So we have an additive isomorphism of cohomology groups
    \[ 
     \widetilde{H}^q(\K_{\upalpha, \mathrm{L}};\mathbf{k}) \cong  
                        H^{q+|\mathrm{L}|+1,\mathrm{L}}(X(\K,\lambda_{\upalpha});\mathbf{k}).
        \]
     $$ \text{Let}\ \ \varphi_{\upalpha} = \underset{\mathrm{L}\subset [k]}{\bigoplus}
     \varphi^{\mathrm{L}}_{\upalpha} :\
      \underset{\mathrm{L}\subset [k]}{\bigoplus} 
                   \widetilde{H}^{q-|\mathrm{L}|-1}(\K_{\upalpha, \mathrm{L}};\mathbf{k}) \longrightarrow
                    H^q(X(\K,\lambda_{\upalpha});\mathbf{k}).$$      
   Then $\varphi_{\upalpha}$ is an isomorphism that satisfies our requirement.                               
   \end{proof}                   
  
   \begin{exam}[Balanced Simplicial Complex and Pullbacks from the Linear Model] 
     \ \ \
      An $(n-1)$-dimensional simplicial complex $\K$ is called \emph{balanced} if 
    there exists a map $\phi : V(\K) \rightarrow [n]=\{1,\cdots, n\}$ such that
    if $\{ v,v'\}$ is an edge of $\K$, then $\phi(x)\neq \phi(y)$ (see~\cite{St96}). 
    We call $\phi$
    an \emph{$n$-coloring} on $\K$. 
     It is easy to see that $\K$ is balanced if and only if $\K$ admits 
     a non-degenerate simplicial map onto $\Delta^{n-1}$. In fact, if we identify
     the vertex set of $\Delta^{n-1}$ with $[n]$, any $n$-coloring $\phi$ on $\K$
     induces a non-degenerate simplicial map from $\K$ to $\Delta^{n-1}$ which
     sends a simplex $\sigma\in \K$ with $V(\sigma)=\{ v_{i_1},\cdots, v_{i_s}\}$
     to the face of $\Delta^{n-1}$ spanned by $\{ \phi(v_{i_1}),\cdots, \phi(v_{i_s}) \}$.
       \n
    
    Suppose
    $\phi:V(\K)\rightarrow [n]$ is an $n$-coloring of an $(n-1)$-dimensional simplicial complex
     $\K$.
    Let $\{e_1,\cdots, e_n\}$ be a basis of $\Z^n$.
    Then $\phi$ uniquely determines a $\Z^n$-coloring 
    $\lambda^{\phi}: V(\K)\rightarrow \Z^n$ where $\lambda^{\phi}(v)= e_{\phi(v)}$.
   The space $X(\K,\lambda^{\phi})$ is called a \emph{pullback from the linear model} 
    in~\cite[Example 1.15]{DaJan91}.      
        On the other hand, we have a partition of $V(\K)$
       defined by $\upalpha_{\phi} := \{ \phi^{-1}(1),\cdots \phi^{-1}(n) \}$. 
   By our notation in section 1, we have
     $X(\K,\lambda^{\phi}) =X(\K,\lambda_{\upalpha_{\phi}})$. 
  Then by Theorem~\ref{Thm:Main-1}, the cohomology groups of $X(\K,\lambda^{\phi})$ 
   can be computed by
     \[  H^q(X(\K,\lambda^{\phi});\mathbf{k}) \cong \bigoplus_{\mathrm{L}
           \subset [n]}  \widetilde{H}^{q-|\mathrm{L}|-1}
                (\K_{\upalpha_{\phi}, \mathrm{L}};\mathbf{k}),\ \forall q\geq 0. \]
   \end{exam}
   
    \ \\
   
   \section{Stable decompositions of $X(\K,\lambda_{\upalpha})$}
   
   It is shown in~\cite{BBCG10} that the stable homotopy type of a polyhedral
   product $(\underline{\mathbb{X}},\underline{\mathbb{A}})^{\K}$  is a wedge of
   spaces, which implies corresponding homological decompositions of
   $(\underline{\mathbb{X}},\underline{\mathbb{A}})^{\K}$. In this section, we
    prove a parallel result
   for $X(\K,\lambda_{\upalpha})$. Our argument proceeds along the same line as~\cite{BBCG10}.\n
   
     Let
    $\upalpha=\{ \alpha_1,\cdots,\alpha_k \}$ be a
     partition of the vertex set $V(\K)$ of $\K$.
     For any $i\in [k]$, choose the base-point of $S^1_{(i)}= e^0_{(i)} \cup e^1_{(i)}$ to be
     $e^0_{(i)}$. So
    \begin{itemize}
      \item $e^0_{(i)}$ is a base-point of $S^1_{(i)}*\tau$ 
   for any simplex $\tau\in\K$, and\n
   
   \item $(e^0_{(i)},\cdots, e^0_{(i_s)})$ is
    a base-point of $S^1_{(i_1)}\times\cdots\times S^1_{(i_s)}$ for any $i_1,\cdots, i_s\in [k]$.
   \end{itemize}
  Then for any subset $\mathrm{L} \subset [k]$
     and any simplex $\sigma\in\K$, it is meaningful to define
    \[ \mathbf{W}^{S^1}_{\upalpha, \mathrm{L}}(\sigma) := 
    \bigwedge_{i\in \mathrm{I}_{\upalpha}(\sigma)\cap \mathrm{L}} 
    S^1_{(i)}*(\sigma\cap \Delta^{\alpha_i}) 
    \wedge \bigwedge_{i\in \mathrm{L} \backslash 
    (\mathrm{I}_{\upalpha}(\sigma)\cap \mathrm{L})} S^1_{(i)} \]
   where $\wedge$ and $\bigwedge$ denote the smash product with respect to the based
  spaces. So for $\mathrm{L}=\{ i_1,\cdots, i_s \}$, 
  the base-point of $\mathbf{W}^{S^1}_{\upalpha, \mathrm{L}}(\sigma)$ is
  $(e^0_{(i)},\cdots, e^0_{(i_s)})$.
   We adopt the convention that the smash product of a space with the empty space is empty.
    Then the following lemma is
    immediate from the definition of $\mathbf{W}^{S^1}_{\upalpha, \mathrm{L}}(\sigma)$.\n
   
    \begin{lem}\label{Lem:W-Simplex-Equiv} 
      For any subset $\mathrm{L} \subset [k]$
     and any simplex $\sigma\in\K$, we have
     \begin{itemize}
      \item[(i)] $\mathbf{W}^{S^1}_{\upalpha, \mathrm{L}}(\sigma) =
     \mathbf{W}^{S^1}_{\upalpha, \mathrm{L}}(\sigma\cap \K_{\upalpha,\mathrm{L}})
       $,\n
       \item[(ii)] 
     $\mathbf{W}^{S^1}_{\upalpha, \mathrm{L}}(\sigma)$ is  
     contractible whenever $\mathrm{I}_{\upalpha}(\sigma)\cap \mathrm{L}\neq \varnothing$,\n
     \item[(iii)] $\mathbf{W}^{S^1}_{\upalpha, \mathrm{L}}(\hat{\mathbf{0}}) = 
     \bigwedge_{i\in \mathrm{L}} S^1_{(i)}\cong S^{|\mathrm{L}|}$.
    \end{itemize}
    \end{lem}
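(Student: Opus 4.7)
The plan is to handle the three parts by direct unpacking of definitions, with the one nontrivial topological input being the standard fact that a smash product containing a well-pointed contractible factor is contractible.

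For (i), I would first identify $\sigma \cap \K_{\upalpha,\mathrm{L}}$ with the subsimplex $\sigma_{\mathrm{L}}$ of $\sigma$ spanned by the vertices in $V(\sigma) \cap \bigcup_{i \in \mathrm{L}} \alpha_i$, together with all its faces. From this description I read off two elementary equalities: $\mathrm{I}_{\upalpha}(\sigma_{\mathrm{L}}) = \mathrm{I}_{\upalpha}(\sigma) \cap \mathrm{L}$, and, for each $i \in \mathrm{L}$, $\sigma_{\mathrm{L}} \cap \Delta^{\alpha_i} = \sigma \cap \Delta^{\alpha_i}$. Substituting these into the definitions of $\mathbf{W}^{S^0}_{\upalpha,\mathrm{L}}(\sigma)$ and $\mathbf{W}^{S^1}_{\upalpha,\mathrm{L}}(\sigma)$ yields the claimed equalities on the nose, since the definitions only read off $\sigma$ via exactly these two pieces of data.

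For (ii), I would choose any $i_0 \in \mathrm{I}_{\upalpha}(\sigma) \cap \mathrm{L}$; then $\sigma \cap \Delta^{\alpha_{i_0}}$ is a nonempty simplex and hence contractible. The join $S^{\epsilon}_{(i_0)} * (\sigma \cap \Delta^{\alpha_{i_0}})$ with $\epsilon \in \{0,1\}$ is the (un)reduced suspension of a contractible CW-complex and is therefore itself contractible. This space appears as one of the smash factors of $\mathbf{W}^{S^{\epsilon}}_{\upalpha,\mathrm{L}}(\sigma)$, and every factor in that smash is a CW-complex with a cellular basepoint, so well-pointedness is automatic and the standard argument that ``smashing with a well-pointed contractible space is contractible'' applies.

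Finally, for (iii), when $\sigma = \hat{\mathbf{0}}$ we have $V(\sigma) = \varnothing$, hence $\mathrm{I}_{\upalpha}(\hat{\mathbf{0}}) \cap \mathrm{L} = \varnothing$, so the first big smash in each definition is empty and only $\bigwedge_{i \in \mathrm{L}} S^{\epsilon}_{(i)}$ remains. The iterated smash of $|\mathrm{L}|$ copies of $S^0$ is again $S^0$, while for $S^1$ it is $S^{|\mathrm{L}|}$, which is exactly the stated identification. The only even mildly delicate point anywhere in the proof is the well-pointedness needed in (ii), but it is automatic here; so in practice every part reduces to a short definitional check.
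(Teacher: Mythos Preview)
Your proposal is correct and matches the paper's approach: the paper simply declares the lemma ``immediate from the definitions of $\mathbf{W}^{S^0}_{\upalpha, \mathrm{L}}(\sigma)$ and $\mathbf{W}^{S^1}_{\upalpha, \mathrm{L}}(\sigma)$'' and gives no further argument, so your careful unpacking of parts (i)--(iii) is exactly the definitional check the paper is gesturing at. The only point worth noting is that your observation about well-pointedness in (ii) is a detail the paper suppresses entirely, but it is indeed automatic here and does not represent a different approach.
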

   
    For any simplex $\sigma\in\K$, define
     \begin{equation}\label{Equ:D-sigma-complex}
    \mathbf{D}_{\upalpha}(\sigma) := 
      \prod_{i\in \mathrm{I}_{\upalpha}(\sigma)} S^1_{(i)}*(\sigma\cap \Delta^{\alpha_i}) 
        \times \prod_{i\in [k]\backslash \mathrm{I}_{\upalpha}(\sigma)} S^1_{(i)}.
    \end{equation}    
  Note that all the $\mathbf{D}_{\upalpha}(\sigma)$ have the same base-point
  $(e^0_{(1)},\cdots, e^0_{(k)})$, which is the base-point of
  $X(\K,\lambda_{\upalpha})$.\n

   By~\cite[Theorem 2.8]{BBCG10}, there are natural homotopy equivalences
      \[
     \mathbf{\Sigma} (\mathbf{D}_{\upalpha}(\sigma)) \simeq 
    \mathbf{\Sigma} \Big( 
      \bigvee_{\mathrm{L}\subset [k]} \mathbf{W}^{S^1}_{\upalpha, \mathrm{L}}
        (\sigma) \Big)  \]
       where $\mathbf{\Sigma}$ denotes the reduced suspension and $\bigvee$ denotes the
       wedge sum with respect to the base-point of 
       $\mathbf{W}^{S^1}_{\upalpha, \mathrm{L}}(\sigma)$. Now let
        $$
          \mathbf{E}_{\upalpha}
          (\sigma) := \bigvee_{\mathrm{L}\subset [k]} \mathbf{W}^{S^1}_{\upalpha, \mathrm{L}}
          (\sigma).$$
          
       Let $\mathrm{Cat}(\K)$ denote the \emph{face category} of $\K$ whose objects are
       simplices $\sigma\in\K$ and there is a morphism from $\sigma$ to $\tau$ whenever
       $\sigma\subseteq \tau$. Then   
     we can consider $\mathbf{D}_{\upalpha}$ and
      $\mathbf{E}_{\upalpha}$ as functors from 
      $\mathrm{Cat}(\K)$ to the category $\mathrm{CW}_*$ of connected, 
      based CW-complexes and based continuous maps. It is clear that      
      \[ X(\K,\lambda_{\upalpha}) = \bigcup_{\sigma\in \K} \mathbf{D}_{\upalpha}(\sigma) =
      \mathrm{colim}(\mathbf{D}_{\upalpha}(\sigma)).   \] 
       For any subset $\mathrm{L} =\{ l_1,\cdots, l_s\}\subset [k]$, define
         $$ \widehat{X}(\K_{\upalpha, \mathrm{L}}, \lambda_{\upalpha}) :=
         \bigcup_{\sigma\in\K} \mathbf{W}^{S^1}_{\upalpha, \mathrm{L}}(\sigma)
         =\bigcup_{\sigma\in\K_{\upalpha,\mathrm{L}}} 
         \mathbf{W}^{S^1}_{\upalpha, \mathrm{L}}(\sigma).$$
    For a fixed $\mathrm{L}\subset [k]$,
       all the spaces $\{ \mathbf{W}^{S^1}_{\upalpha, \mathrm{L}}(\sigma), \sigma\in \K_{\upalpha,\mathrm{L}}\}$ share a base-point, which then defines the base-point of 
       $\widehat{X}(\K_{\upalpha, \mathrm{L}}, \lambda_{\upalpha})$. So 
      $\widehat{X}(\K_{\upalpha, \mathrm{L}}, \lambda_{\upalpha})$ is
       the colimit of $\mathbf{W}^{S^1}_{\upalpha, \mathrm{L}}(\sigma)$ over 
       the face category $\mathrm{Cat}(\K_{\upalpha, \mathrm{L}})$ of 
       $\K_{\upalpha, \mathrm{L}}$.
       In addition, we clearly have
        $$ 
        \mathrm{colim}(\mathbf{E}_{\upalpha}(\sigma))=
           \bigvee_{\mathrm{L}\subset [k]} \widehat{X}(\K_{\upalpha, \mathrm{L}}, 
           \lambda_{\upalpha}).$$

      Since the suspension commutes with the colimits up to homotopy
      equivalence (see~\cite[Theorem 4.3]{BBCG10}), 
      we obtain the following homotopy equivalences.
      \begin{align} \label{Equ:Homm-Equv-1}
       \mathbf{\Sigma}( X(\K,\lambda_{\upalpha}) ) \simeq 
        \mathrm{colim}(\mathbf{\Sigma}(\mathbf{D}_{\upalpha}(\sigma)))
     & \simeq \mathrm{colim}(\mathbf{\Sigma} (\mathbf{E}_{\upalpha}(\sigma))) \notag \\
     & \simeq \mathbf{\Sigma} \big( \bigvee_{\mathrm{L}\subset [k]} 
           \widehat{X}(\K_{\upalpha, \mathrm{L}},\lambda_{\upalpha}) \big).
      \end{align}
      
   \begin{defi}[Order Complex]
    Given a poset (partially ordered set) $(\mathcal{P}, <)$, the \emph{order complex}
     $\Delta(\mathcal{P})$ is the simplicial complex with vertices
    given by the set of points of $\mathcal{P}$ and $k$-simplices given by the 
     ordered $(k+1)$-tuples $(p_1,p_2, . . . , p_{k+1})$ in $\mathcal{P}$ 
     with $p_1 < p_2 < \ldots < p_{k+1}$.
   \end{defi}
            
    \begin{lem} \label{Lem:Wedge-Equiv}
     For any $\mathrm{L}\subset [k]$, there is a homotopy equivalence:
      $$\widehat{X}(\K_{\upalpha, \mathrm{L}},\lambda_{\upalpha}) \simeq
      \bigvee_{\sigma\in \K_{\upalpha,\mathrm{L}}} 
      |\Delta((\K_{\upalpha, \mathrm{L}})_{<\sigma})| *
      \mathbf{W}^{S^1}_{\upalpha, \mathrm{L}}(\sigma).$$
     Here $\Delta((\K_{\upalpha, \mathrm{L}})_{<\sigma})$ is the order complex
      of the poset $\{ \tau\in \K_{\upalpha, \mathrm{L}}\,|\, \tau\supsetneq \sigma \}$ 
      whose order is the reverse inclusion,
       and $|\Delta((\K_{\upalpha, \mathrm{L}})_{<\sigma})|$ is 
      the geometric realization of $\Delta((\K_{\upalpha, \mathrm{L}})_{<\sigma})$.
    \end{lem}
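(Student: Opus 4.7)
The plan is to realize $X^{\wedge}(\K_{\upalpha,\mathrm{L}},\lambda_{\upalpha})$ and $X^{\wedge}(\K_{\upalpha,\mathrm{L}},\Lambda_{\upalpha})$ as colimits of cofibrant based diagrams over the face category $\mathrm{Cat}(\K_{\upalpha,\mathrm{L}})$, and to apply the wedge decomposition of colimits of such diagrams from~\cite{BBCG10}. Since the two cases are strictly parallel (replace $S^0_{(i)}$ by $S^1_{(i)}$ throughout), I sketch only the $\lambda_{\upalpha}$-case; the $\Lambda_{\upalpha}$-case is formally identical.

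First, I would verify that $\sigma\mapsto \mathbf{W}^{S^0}_{\upalpha,\mathrm{L}}(\sigma)$ carries every morphism $\sigma\subseteq \tau$ of $\mathrm{Cat}(\K_{\upalpha,\mathrm{L}})$ to a based cofibration: this reduces, factor by factor in the smash product, to the fact that the inclusion $\sigma\cap\Delta^{\alpha_i}\hookrightarrow\tau\cap\Delta^{\alpha_i}$ of a face into a simplex is a cofibration, and that this property is preserved under join with $S^0_{(i)}$ and under smash products. Combined with Lemma~\ref{Lem:W-Simplex-Equiv}(i), which allows the index category to be restricted from $\mathrm{Cat}(\K)$ down to $\mathrm{Cat}(\K_{\upalpha,\mathrm{L}})$ without changing the union, this gives
\[ X^{\wedge}(\K_{\upalpha,\mathrm{L}},\lambda_{\upalpha})\;=\;\mathrm{colim}_{\sigma\in \mathrm{Cat}(\K_{\upalpha,\mathrm{L}})}\mathbf{W}^{S^0}_{\upalpha,\mathrm{L}}(\sigma)\;\simeq\;\mathrm{hocolim}_{\sigma\in \mathrm{Cat}(\K_{\upalpha,\mathrm{L}})}\mathbf{W}^{S^0}_{\upalpha,\mathrm{L}}(\sigma). \]

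Next I would invoke the wedge decomposition for homotopy colimits of based CW-diagrams $F$ over the face category of a simplicial complex $\K'$, namely
\[ \mathrm{hocolim}_{\sigma\in \mathrm{Cat}(\K')}F(\sigma)\;\simeq\;\bigvee_{\sigma\in \K'}\bigl|\Delta(\K'_{>\sigma})\bigr|*F(\sigma). \]
This is precisely the based-diagram splitting used in~\cite[Sec 2]{BBCG10} to derive the stable decomposition of polyhedral products; the join factor $|\Delta(\K'_{>\sigma})|$ is the nerve of the slice category of simplices strictly containing $\sigma$, and the join operation records how higher chains in the category attach to $F(\sigma)$. Specialising to $F=\mathbf{W}^{S^0}_{\upalpha,\mathrm{L}}$ and $\K'=\K_{\upalpha,\mathrm{L}}$ yields the first claimed homotopy equivalence; replacing $S^0_{(i)}$ by $S^1_{(i)}$ gives the $\Lambda_{\upalpha}$-version. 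As a consistency check, Lemma~\ref{Lem:W-Simplex-Equiv}(ii) forces $\mathbf{W}^{S^0}_{\upalpha,\mathrm{L}}(\sigma)$ to be contractible whenever $\sigma\neq \hat{\mathbf{0}}$ in $\K_{\upalpha,\mathrm{L}}$ (since then $\mathrm{I}_{\upalpha}(\sigma)\subseteq\mathrm{L}$ is forced to be nonempty), so all contributing wedge summands except the one at $\sigma=\hat{\mathbf{0}}$ collapse, and that single surviving summand $|\Delta((\K_{\upalpha,\mathrm{L}})_{>\hat{\mathbf{0}}})|* S^0\simeq \mathbf{\Sigma}|\K_{\upalpha,\mathrm{L}}|$ is compatible with Theorem~\ref{Thm:Stable-Decomp}.

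The main obstacle is the bookkeeping required to pass from the formalism of~\cite{BBCG10}, which is written for polyhedral (smash) products built from Cartesian products of pairs, to the present situation whose building blocks mix joins and smash products. One must carefully check: (a) variance, i.e.\ that the factor is indexed by $\K'_{>\sigma}$ rather than $\K'_{<\sigma}$, which follows from $F$ being covariant on the face poset; (b) base-point compatibility, so that the smashes and joins preserve cofibrancy and nondegenerate base-points (this is where the specific choices of basepoints on $S^0_{(i)}$ and $S^1_{(i)}$ matter); and (c) the degenerate conventions $|\Delta(\varnothing)|=\varnothing$ with $\varnothing*X=X$ at maximal simplices, and $\mathbf{W}^{S^0}_{\upalpha,\mathrm{L}}(\hat{\mathbf{0}})=S^0$, $\mathbf{W}^{S^1}_{\upalpha,\mathrm{L}}(\hat{\mathbf{0}})\cong S^{|\mathrm{L}|}$ at the empty simplex. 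Once these verifications are carried out, the lemma follows.
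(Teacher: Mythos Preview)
Your argument has a genuine gap: the splitting
\[
\mathrm{hocolim}_{\sigma\in\mathrm{Cat}(\K')}F(\sigma)\;\simeq\;\bigvee_{\sigma\in\K'}|\Delta(\K'_{>\sigma})|*F(\sigma)
\]
is \emph{not} valid for an arbitrary based diagram $F$. This is a form of the wedge lemma, and it requires the additional hypothesis that every transition map $F(\tau)\to F(\sigma)$ for $\tau\subsetneq\sigma$ be null-homotopic (equivalently, that the diagram be naturally weakly equivalent to one whose structure maps are constant to the basepoint). Establishing cofibrancy only buys you $\mathrm{colim}\simeq\mathrm{hocolim}$; it does not by itself produce the wedge.

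The paper's proof supplies exactly this missing step. It observes that the inclusion $S^0\hookrightarrow S^0*\Delta^r$ is null-homotopic for every $r\ge 0$, and then, parallel to~\cite[Theorem~2.12]{BBCG10}, builds self-homotopy-equivalences $H_{\mathrm{L}}(\sigma)$ of each $\mathbf{W}^{S^0}_{\upalpha,\mathrm{L}}(\sigma)$ that conjugate the natural inclusions $\zeta_{\sigma,\tau}$ to constant maps $c_{\sigma,\tau}$. Only with this in hand does it invoke~\cite[Theorems~4.1 and~4.2]{BBCG10} to obtain the decomposition. Your ``consistency check'' via Lemma~\ref{Lem:W-Simplex-Equiv}(ii) could in fact be promoted to an alternative verification of the null-homotopy hypothesis (any map into a contractible target is null-homotopic, and every $\mathbf{W}^{S^0}_{\upalpha,\mathrm{L}}(\sigma)$ with $\sigma\neq\hat{\mathbf{0}}$ is contractible), but you must invoke it \emph{before} applying the wedge lemma, as the hypothesis that licenses it, not afterwards as a sanity check.
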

    \begin{proof}
        Note that the natural inclusion
      $S^1\hookrightarrow S^1 * \Delta^{[m]}$ is
      null-homotopic for any $m\geq 1$. Then the same argument as in the proof 
      of~\cite[Theorem 2.12]{BBCG10} shows that there is a homotopy equivalence
       $H_{\mathrm{L}}(\sigma): \mathbf{W}^{S^1}_{\upalpha,\mathrm{L}}(\sigma) \rightarrow 
       \mathbf{W}^{S^1}_{\upalpha,\mathrm{L}}(\sigma)$ 
      for each simplex $\sigma\in \K_{\upalpha,\mathrm{L}}$ so that the following 
      diagram commutes for any
      $\tau\subseteq \sigma\in \K_{\upalpha,\mathrm{L}}$,
     \begin{equation}\label{Equ:Wedge-Equiv}
        \xymatrix{
          \mathbf{W}^{S^1}_{\upalpha,\mathrm{L}}(\tau) 
              \ar[r]^{H_{\mathrm{L}}(\tau)} \ar[d]_{\zeta_{\sigma,\tau}} &
              \mathbf{W}^{S^1}_{\upalpha,\mathrm{L}}(\tau)
            \ar[d]^{c_{\sigma,\tau}} \\
            \mathbf{W}^{S^1}_{\upalpha,\mathrm{L}}(\sigma)
             \ar[r]^{H_{\mathrm{L}}(\sigma)} & 
             \mathbf{W}^{S^1}_{\upalpha,\mathrm{L}}(\sigma)        
           } 
      \end{equation}  
  where $\zeta_{\sigma,\tau}$ is the natural inclusion and
  $c_{\sigma,\tau}$ is the constant map
  to the base-point. Then by~\cite[Theorem 4.1]{BBCG10} 
  and~\cite[Theorem 4.2]{BBCG10}, there is a homotopy equivalence
  $$
     \widehat{X}(\K_{\upalpha,\mathrm{L}}, \lambda_{\upalpha})
        =\mathrm{colim} (\mathbf{W}^{S^1}_{\upalpha,\mathrm{L}}(\sigma)) \ 
  \ \simeq \ 
  \bigvee_{\sigma\in \K_{\upalpha,\mathrm{L}}} |\Delta((\K_{\upalpha,\mathrm{L}})_{<\sigma})| *
      \mathbf{W}^{S^1}_{\upalpha,\mathrm{L}}(\sigma).$$
  So the lemma is proved.  \end{proof}
\nn

\begin{proof}[\textbf{Proof of Theorem~\ref{Thm:Stable-Decomp}}]
 Putting the homotopy equivalences in the equation~\eqref{Equ:Homm-Equv-1} 
 and Lemma~\ref{Lem:Wedge-Equiv} 
 together gives us a homotopy equivalence:
   \[   \mathbf{\Sigma}( X(\K,\lambda_{\upalpha}) ) \simeq 
     \mathbf{\Sigma} \Big( \bigvee_{\mathrm{L}\subset [k]} 
           \bigvee_{\sigma\in \K_{\upalpha,\mathrm{L}}}
            |\Delta((\K_{\upalpha,\mathrm{L}})_{<\sigma})| *
      \mathbf{W}^{S^1}_{\upalpha,\mathrm{L}}(\sigma) \Big). \]  
Moreover, the space in the left hand side can be simplified by the following facts.\nn
  \begin{itemize}
  \item
  $\mathbf{W}^{S^1}_{\upalpha,\mathrm{L}}(\sigma)$ is contractible whenever $\sigma\neq 
  \hat{\mathbf{0}}\in \K_{\upalpha,\mathrm{L}}$ (see Lemma~\ref{Lem:W-Simplex-Equiv}(ii)). \n
  \item
  $\Delta((\K_{\upalpha,\mathrm{L}})_{<\hat{\mathbf{0}}})$
  is isomorphic to the barycentric subdivision $\K'_{\upalpha,\mathrm{L}}$ of
   $\K_{\upalpha,\mathrm{L}}$ as simplicial complexes.
 So the geometric realization $|\Delta((\K_{\upalpha,\mathrm{L}})_{<\hat{\mathbf{0}}})|$ is homeomorphic to that of $\K_{\upalpha,\mathrm{L}}$.
  \end{itemize}
  Then by Lemma~\ref{Lem:W-Simplex-Equiv}, 
  we have the following homotopy equivalences:
    \[ \ \ \mathbf{\Sigma}( X(\K,\lambda_{\upalpha}) ) \simeq 
         \mathbf{\Sigma} \Big( \bigvee_{\mathrm{L}\subset [k]} 
            |\K_{\upalpha,\mathrm{L}}| * S^{|\mathrm{L}|}
     \Big) \simeq  \bigvee_{\mathrm{L}\subset [k]} 
           \mathbf{\Sigma}^{|\mathrm{L}|+2}( |\K_{\upalpha,\mathrm{L}}|). \] 
  So Theorem~\ref{Thm:Stable-Decomp} is proved.
 \end{proof}
                
 \vskip .4cm

    \section{A description of moment-angle complexes of simplicial posets}
     A poset (partially ordered set) $\mS$ with the order relation
    $\leq $ is called \emph{simplicial} if it has an initial element
    $\hat{0}$ and for each $\sigma \in \mS$ the lower segment
    $$[\hat{0},\sigma]=\{ \tau\in \mS: \hat{0} \leq \tau \leq \sigma\}$$
    is the face poset of a simplex.
   \n
        
    For each $\sigma \in \mS$ we assign a geometric simplex $\Delta^{\sigma}$
    whose face poset is $[\hat{0},\sigma]$, and glue these geometric simplices
    together according to the order relation in $\mS$. We get a cell
    complex $\Delta^{\mS}$ in which the closure of each cell is identified with a simplex
    preserving the face structure, and all attaching maps are inclusions.    
      We call $\Delta^{\mS}$ the \emph{geometric realization} of $\mS$.
       For convenience, we still use $\Delta^{\sigma}$ to denote the image of each
    geometric simplex $\Delta^{\sigma}$ in $\Delta^{\mS}$. Then $\Delta^{\sigma}$
    is a \emph{maximal simplex} of $\Delta^{\mS}$ 
    if and only if $\sigma$ is a maximal element of $\mS$.
  \n
      
    The notion of moment-angle complex $\mathcal{Z}_{\mS}$ associated to a 
    simplicial poset $\mS$ is introduced
    in~\cite{LuPanov11} where $\mathcal{Z}_{\mS}$ is defined via the categorical 
    language. Note that the barycentric subdivision also makes sense
    for $\Delta^{\mS}$. Let $P_{\mS}$ denote the cone of the barycentric
    subdivision of $\Delta^{\mS}$. Let the vertex set of $\Delta^{\mS}$ be
     $V(\Delta^{\mS})=\{ u_1,\cdots, u_k\}$.
     Let $\lambda_{\mS}: V(\Delta^{\mS}) \rightarrow \Z^k$ be a map so that 
     $\{ \lambda_{\mS}(u_i), \, 1\leq i \leq k\}$ is a unimodular basis of $\Z^k$.  
    Then we can construct
      $\mathcal{Z}_{\mS}$ from $P_{\mS}$ and $\lambda_{\mS}$ 
     via the same rule in~\eqref{Def:Complex-Quotient}. So we also denote
      $\mathcal{Z}_{\mS}$ by $X(\mS,\lambda_{\mS})$. \nn

      Define a map $\lambda: [m]=\{v_1,\cdots, v_m\} 
         \rightarrow \Z^m$ by $\lambda(v_i)=e_i$, $1\leq i \leq m$,
          where $\{e_1,\cdots, e_m\}$ is a unimodular basis of $\Z^m$. 
      We identify $\Delta^{[m]}$ with $\Delta^{[m]}\times \{0\}$ in 
      $\Delta^{[m]}\times[0,1]$ (considered as a product of simplices).  
      Let the vertex set of 
        $\Delta^{[m]}\times[0,1]$ be $\{v_1,\cdots, v_m,v'_1,\cdots, v'_m\}$
        where $v'_i = v_i\times \{1\}$, $1\leq i \leq m$.      
        Define a map $\tilde{\lambda}: \{v_1,\cdots, v_m,v'_1,\cdots, v'_m\} 
         \rightarrow \Z^m$ by 
         $$\tilde{\lambda}(v_i)=\tilde{\lambda}(v'_i)=e_i,\ 1\leq i \leq m.$$ 
        It is clear that $X(\Delta^{[m]}, \lambda)$ can be considered as a subspace
        $X(\Delta^{[m]}\times[0,1],\tilde{\lambda})$.
      
       \begin{lem}\label{Lem:Deform-2}
        There is a canonical deformation retraction from 
       $X(\Delta^{[m]}\times[0,1],\tilde{\lambda})$ to 
       $X(\Delta^{[m]}, \lambda)=\mathcal{Z}_{\Delta^{[m]}}$.     
        \end{lem}
      \begin{proof}
        Any
        $m$-simplex in $\Delta^{[m]}\times[0,1]$ can be written as
        $$\sigma^m_j = [v_1,\cdots, v_j,v'_j,\cdots, v'_m], \ 1\leq j \leq m.$$
        Note that $\sigma^m_j\cap \sigma^m_{j+1} = [v_1,\cdots,v_j,v'_{j+1},\cdots, v'_m]$.
        So $\sigma^m_1, \sigma^m_2 , \cdots, \sigma^m_m$ is a shelling of 
        $\Delta^{[m]}\times [0,1]$.
        By the cell decomposition~\eqref{Equ:Decomp-Quotient}, we have
        \[ X(\Delta^{[m]}, \lambda) =  
        (S^1_{(1)} *v_1) \times \cdots  \times (S^1_{(m)} *v_m)  
         \subset \prod_{1\leq j\leq m} S^1_{(j)}*[v_j,v'_j].
        \]
         \[
        X(\Delta^{[m]}\times[0,1],\tilde{\lambda}) 
          = \bigcup_{1\leq j \leq m} B_j
             \subset \prod_{1\leq j\leq m} S^1_{(j)}*[v_j,v'_j], \ \text{where} \qquad\qquad\]
        $B_j =
        (S^1_{(1)} *v_1) \times \cdots  \times (S^1_{(j-1)} *v_{j-1}) 
           \times (S^1_{(j)} *[v_j, v'_j]) \times
         (S^1_{(j+1)} *v'_{j+1}) \times \cdots \times  (S^1_{(m)} *v'_m)$.\nn
      
     There is a canonical deformation retraction from $\Delta^{[m]}\times [0,1]$
    to $\Delta^{[m]}$ along the above shelling of $\Delta^{[m]}\times [0,1]$ as follows. 
    We first compress the edge $[v_1,v'_1]$ to $v_1$, which
    induces a deformation retraction 
     $$\sigma^m_1 = [v_1,v'_1,\cdots, v'_m] \longrightarrow 
        [v_1,v'_2\cdots, v'_m] =  \sigma^m_1\cap \sigma^m_2.$$
     It compresses $\Delta^{[m]}\times [0,1] = \bigcup_{1\leq j \leq m} \sigma^m_j$ 
     to $\bigcup_{2\leq j \leq m} \sigma^m_j$.  
     Next, we compress the edge $[v_2,v'_2]$ to $v_2$ which
    induces a deformation retraction from $\sigma^m_2$ to $\sigma^m_2\cap\sigma^m_3$ and hence
    compresses $\bigcup_{2\leq j \leq m} \sigma^m_j$ to 
    $\bigcup_{3\leq j \leq m} \sigma^m_j$, and so on. After $m$ steps of retractions,  
    $\Delta^{[m]}\times [0,1]$ is deformed to $\Delta^{[m]}\times \{0\} = \Delta^{[m]}$
    (see Figure~\ref{p:3-Prism}).\n
    
     \begin{figure}
         \includegraphics[width=0.94\textwidth]{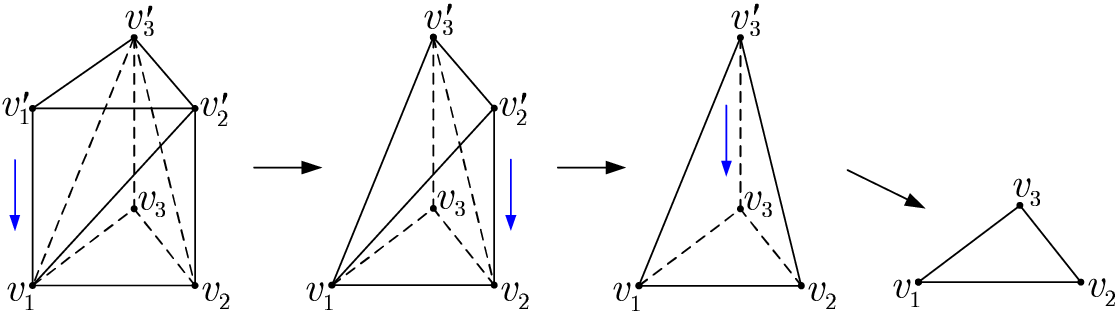}\\
          \caption{Canonical deformation retraction from $\Delta^{[m]}\times [0,1]$
    to $\Delta^{[m]}$} 
          \label{p:3-Prism}
      \end{figure}

     Using the above retractions of $\Delta^{[m]}\times [0,1]$, we obtain parallel deformation
      retractions
      from $\prod_{1\leq j\leq m} S^1_{(j)}*[v_j,v'_j]$ to $X(\Delta^{[m]}, \lambda)$
      in $m$ steps. The $j$-th step is
    to compress $S^1_{(j)} *[v_j, v'_j]$ to $S^1_{(j)} *v_j$  
       along the edge $[v_j,v_j']$, which compresses
       $B_j$ to $B_j\cap B_{j+1}$. Then starting from the first step,
     we obtain a sequence of retractions
    $$X(\Delta^{[m]}\times[0,1],\tilde{\lambda})=\bigcup_{1\leq j \leq m} B_j  \overset{\text{step 1}}{\longrightarrow} \bigcup_{2\leq j \leq m} B_j 
     \overset{\text{step 2}}{\longrightarrow} 
    \cdots \longrightarrow  \bigcup_{m-1 \leq j \leq m} B_j \overset{\text{step m-1}}{\longrightarrow}   B_m 
    $$
    $$ \overset{\text{step m}}{\longrightarrow}  
      (S^1_{(1)} *v_1) \times \cdots  \times (S^1_{(m)} *v_m) = X(\Delta^{[m]}, \lambda). $$
   The above deformation process is canonical since it only depends on the 
   ordering of the vertices of $\Delta^{[m]}$.
 \end{proof}
      
      The canonical deformation retraction from 
       $X(\Delta^{[m]}\times[0,1],\tilde{\lambda})$ to 
       $X(\Delta^{[m]}, \lambda)$ in the above lemma will serve
       as a model of homotopies in our argument below.

   \begin{thm} \label{Thm:MAC-Poset}
         For any finite simplicial poset $\mS$, there always exists a finite 
      simplicial complex
      $\K$ and a partition $\upalpha$ of $V(\K)$ so that $\mathcal{Z}_{\mS}$
      is homotopy equivalent to $X(\K,\lambda_{\upalpha})$.
      \end{thm}
      \begin{proof}
       We can construct $\K$ and $\upalpha$ in the following way. 
       Let $p: \Delta^{\mS} \times [0,n] \rightarrow \Delta^{\mS}$ be the projection
       where $\Delta^{\mS}$ is identified with $\Delta^{\mS}\times \{0\}$, and $n$ is a large
       enough integer. 
        For each maximal simplex
       $\Delta^{\sigma}\subset \Delta^{\mS}$, we 
        can choose a simplex $\widetilde{\Delta}^{\sigma}\subset \Delta^{\mS} \times 
        \{ l_{\sigma}\} $
        for some $0\leq l_{\sigma} \leq n$ so that
       \begin{itemize}
           \item $p$ maps $\widetilde{\Delta}^{\sigma}$ simplicially isomorphically onto 
                $\Delta^{\sigma}$.\n
                    
        \item $\widetilde{\Delta}^{\sigma}\cap \widetilde{\Delta}^{\tau} =\varnothing$ for any 
              maximal elements $\sigma$ and $\tau$ in $\mS$.  
        \end{itemize} 
       We call $\widetilde{\Delta}^{\sigma}$ a \emph{horizontal lifting} of $\Delta^{\sigma}$.
        We consider $\Delta^{\sigma} \times [0,n]$ 
       as the Cartesian product of $\Delta^{\sigma}$ and $[0,n]$ as 
       simplicial complexes (see~\cite[Construction 2.11]{BP02}), where $[0,n]$ is considered
       as a $1$-dimensional simplicial complex with vertices $\{0,\cdots, n\}$ and the set of
       $1$-simplices $\{ [i,i+1], 0\leq i \leq n-1 \}$.
       If $\sigma$ and $\tau$ are both maximal, 
       $\Delta^{\sigma}\cap \Delta^{\tau}$ is the geometric realization of 
       $\sigma\wedge\tau$ (the greatest common lower bound of $\sigma$ and $\tau$).
       Now define
         $$\K= \Big(\underset{\text{maximal}}{\bigcup_{\sigma\in \mS}} 
             \widetilde{\Delta}^{\sigma} \Big)
           \bigcup \Big( \underset{l_{\sigma}<l_{\tau}}{\underset{\text{maximal}}
           {\bigcup_{\sigma,\tau\in \mS}}}
            (\Delta^{\sigma} \cap \Delta^{\tau} )\times [l_{\sigma},l_{\tau}] \Big),
            $$ 
         where $[l_{\sigma},l_{\tau}]$ is considered as a simplicial subcomplex of $[0,n]$.  
     Then by our construction, $\K$ is clearly a finite simplicial complex, called a 
     \emph{stretch of $\Delta^{\mS}$} 
     (see Figure~\ref{p:Lifting} for example).          
     Let $V(\Delta^{\mS})=\{u_1,\cdots, u_k\}$ be the vertex set of $\Delta^{\mathcal{S}}$.
     Define 
      a partition $\upalpha=\{ \alpha_1,\cdots,\alpha_k\}$ of $V(\K)$
       by 
       $$\alpha_i= \{ v \in V(\K)\, |\, p(v) = u_i\}, \ 1\leq i \leq k.$$
       Then we get a $\Z^k$-coloring $\lambda_{\upalpha}$ on $\K$. In the following we show that 
       the space $X(\K,\lambda_{\upalpha})$ is homotopy equivalent to 
      $X(\mathcal{S},\lambda_{\mathcal{S}})=\mathcal{Z}_{\mathcal{S}}$.\n
      
       \begin{figure}
         \includegraphics[width=0.93\textwidth]{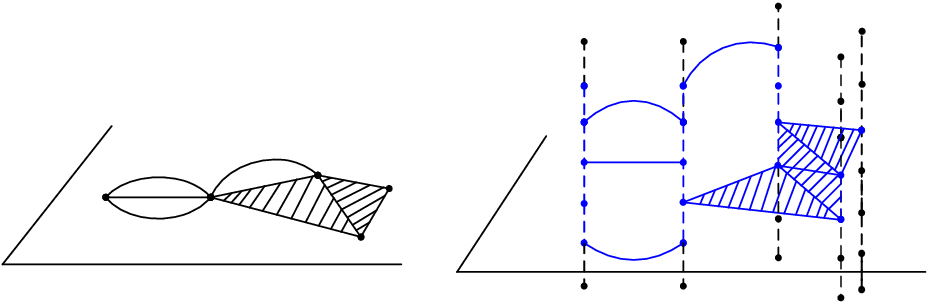}\\
          \caption{A stretch of a simplicial poset} 
          \label{p:Lifting}
      \end{figure}
      
      For a pair of maximal elements $\sigma,\tau\in \mS$, we have a decomposition
       $$(\Delta^{\sigma}\cap \Delta^{\tau}) \times [l_{\sigma},l_{\tau}] =
           \bigcup_{l_{\sigma}\leq s < l_{\tau}}
             (\Delta^{\sigma}\cap \Delta^{\tau}) \times [s,s+1]
             = \bigcup_{l_{\sigma}\leq s < l_{\tau}} \bigcup_{\omega\in \sigma\wedge\tau}
             \Delta^{\omega} \times [s,s+1].
       $$ 
      Given the shelling of each $\Delta^{\omega} \times [s,s+1]$ as we do for
       $\Delta^{[m]}\times [0,1]$
      in the proof of Lemma~\ref{Lem:Deform-2}, we obtain a canonical shelling of 
       $(\Delta^{\sigma}\cap \Delta^{\tau}) \times [l_{\sigma},l_{\tau}]$.   
      If for 
      all pairs of maximal elements $\sigma,\tau\in\mS$ we do the deformation retractions 
      from $(\Delta^{\sigma}\cap \Delta^{\tau}) \times [l_{\sigma},l_{\tau}]$ to
      $(\Delta^{\sigma}\cap \Delta^{\tau}) \times\{l_{\sigma}\}$ in $\mathcal{K}$ along their
      canonical shellings, we obtain a space that can be identified with  
       $\Delta^{\mS}$ in the end.
       Note that all these retractions are caused by 
       compressing $\{v\} \times [s, s+1]$ 
       to $\{v\}\times \{s\}$ step by step for each vertex 
       $v\in \Delta^{\sigma}\cap \Delta^{\tau}$. So for 
       different pairs of maximal elements of $\mathcal{S}$, the
       retractions we constructed are compatible with each other
       and hence can be done simultaneously at each $\{v\} \times [s, s+1]$.
       In addition, the $\Z^m$-coloring $\lambda_{\upalpha}$ on $\mathcal{K}$ 
       naturally induces a $\Z^m$-coloring on the space in each step of the
       deformation and eventually recovers 
       $\lambda_{\mathcal{S}}$ on $\Delta^{\mathcal{S}}$ in the end. So by
       applying Lemma~\ref{Lem:Deform-2} to every $\Delta^{\omega} \times [s,s+1]
       \subset (\Delta^{\sigma}\cap \Delta^{\tau}) \times [l_{\sigma},l_{\tau}]$,
       we see that the above deformation retractions on $\mathcal{K}$ induce a
       homotopy equivalence from $X(\K,\lambda_{\upalpha})$ to 
       $X(\mathcal{S}, \lambda_{\mathcal{S}})=\mathcal{Z}_{\mathcal{S}}$. 
       This proves the theorem.
     \end{proof}
      
  \n
       
       For any subset $\mathrm{L}\subset V(\mS)$, let $\mS_{\mathrm{L}}$ 
       denote the \emph{full subposet} of $\mS$
       with vertex set $\mathrm{L}$. By our construction of the stretch $\K$ of $\mS$
        in the proof of Theorem~\ref{Thm:Main-1}, the geometric realization 
        $\Delta^{\mS_{\mathrm{L}}}$ of $\mS_{\mathrm{L}}$ is a deformation retraction of
       the subcomplex $\K_{\upalpha,\mathrm{L}}$ of $\K$. So by 
      Theorem~\ref{Thm:MAC-Poset} and Theorem~\ref{Thm:Main-1},
       we derive that for any $q\geq 0$,
       $$ H^q(\mathcal{Z}_{\mS};\mathbf{k}) \cong H^q(X(\K,\lambda_{\upalpha});\mathbf{k})\cong
          \underset{\mathrm{L}\subset V(\mS)}{\bigoplus} \widetilde{H}^{q-|\mathrm{L}|-1} 
          (\K_{\upalpha, \mathrm{L}};\mathbf{k}) \cong 
          \underset{\mathrm{L}\subset V(\mS)}{\bigoplus} \widetilde{H}^{q-|\mathrm{L}|-1} 
          (\Delta^{\mS_{\mathrm{L}}};\mathbf{k}).$$
    The above equality
    can also be derived from \cite[Theorem 3.5]{LuPanov11} easily.\\

   \section{Some generalizations}
 
    We can generalize our results on $X(\K,\lambda_{\upalpha})$
   to a wider range of spaces as follows. For a
  partition $\upalpha=\{\alpha_1,\cdots, \alpha_k\}$ of $V(\K)$, 
  we can replace the $S^1_{(i)}$ in~\eqref{Equ:Decomp-Quotient}
  by a sequence of spheres $\underline{\mathbb{S}}=( S^{d_1},\cdots, S^{d_k})$ and define
  \[ \quad\ \qquad  X(\K, \upalpha,\underline{\mathbb{S}}) = \bigcup_{\sigma\in \K} \Big(
       \prod_{i\in \mathrm{I}_{\upalpha}(\sigma)} S^{d_i}*(\sigma\cap \Delta^{\alpha_i}) \times
        \prod_{i\in [k]\backslash \mathrm{I}_{\upalpha}(\sigma)} 
        S^{d_i} \Big) \subset \prod_{i\in [k]} 
        S^{d_i}*\Delta^{\alpha_i}.
  \]
  In particular when $\underline{\mathbb{S}}=( S^0,\cdots, S^0)$, the $X(\K, \upalpha,\underline{\mathbb{S}})$
  is a quotient space of the real moment-angle complex of $\K$ by the action of 
  some $\Z_2$-torus.
  We have the following two theorems which are
   parallel to Theorem~\ref{Thm:Main-1} and Theorem~\ref{Thm:Stable-Decomp}, respectively.\n
    \begin{thm} \label{Thm:Main-1-gen}
       For any coefficients $\mathbf{k}$, there is a $\mathbf{k}$-module isomorphism: 
      $$\qquad \ \ \quad H^q(X(\K, \upalpha,\underline{\mathbb{S}});\mathbf{k}) \cong \underset{\mathrm{L}
           \subset [k]}{\bigoplus} 
                   \widetilde{H}^{q-1-\sum_{i\in\mathrm{L}}d_i}
                   (\K_{\upalpha, \mathrm{L}};\mathbf{k}),\ \forall q\geq 0.$$
      \end{thm}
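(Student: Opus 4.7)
\textbf{Proof proposal for Theorem~\ref{Thm:Main-1-gen}.} The plan is to follow the strategy of Section~4 (stable decomposition) rather than the direct cellular computation of Section~3, because the BBCG-style argument generalizes almost verbatim once the one-point spheres $S^0$ (respectively $S^1$) in the two original formulas~\eqref{Equ:Decomp-Real-form} and~\eqref{Equ:Decomp-Complex-form} are replaced by an arbitrary sequence $\mathbb{S}=(S^{d_1},\ldots,S^{d_k})$. In particular, I will first establish a stable splitting
\[
\mathbf{\Sigma}\big(X(\K,\upalpha,\mathbb{S})\big)\ \simeq\ \bigvee_{\mathrm{L}\subset [k]}\mathbf{\Sigma}^{\,\sum_{i\in\mathrm{L}}d_i+2}\big(|\K_{\upalpha,\mathrm{L}}|\big),
\]
and then read off the cohomology by the reduced suspension isomorphism, which gives the desired shift of $(\sum_{i\in\mathrm{L}}d_i + 1)$ in degree.

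First I would define, for each $\sigma\in\K$ and each $\mathrm{L}\subset[k]$, the analogues of $\mathbf{D}_{\upalpha}(\sigma)$ and $\mathbf{W}^{S^0}_{\upalpha,\mathrm{L}}(\sigma)$, namely
\[
\mathbf{D}^{\mathbb{S}}_{\upalpha}(\sigma):=\prod_{i\in\mathrm{I}_{\upalpha}(\sigma)}S^{d_i}*(\sigma\cap\Delta^{\alpha_i})\times\prod_{i\in[k]\setminus\mathrm{I}_{\upalpha}(\sigma)}S^{d_i},
\]
\[
\mathbf{W}^{\mathbb{S}}_{\upalpha,\mathrm{L}}(\sigma):=\bigwedge_{i\in\mathrm{I}_{\upalpha}(\sigma)\cap\mathrm{L}}S^{d_i}*(\sigma\cap\Delta^{\alpha_i})\ \wedge\ \bigwedge_{i\in\mathrm{L}\setminus(\mathrm{I}_{\upalpha}(\sigma)\cap\mathrm{L})}S^{d_i},
\]
so that $X(\K,\upalpha,\mathbb{S})=\mathrm{colim}_{\sigma\in\mathrm{Cat}(\K)}\mathbf{D}^{\mathbb{S}}_{\upalpha}(\sigma)$. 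The analogue of Lemma~\ref{Lem:W-Simplex-Equiv} holds for exactly the same reasons: the join $S^{d_i}*(\sigma\cap\Delta^{\alpha_i})$ is contractible whenever $\sigma\cap\Delta^{\alpha_i}\neq\varnothing$, so $\mathbf{W}^{\mathbb{S}}_{\upalpha,\mathrm{L}}(\sigma)$ is contractible whenever $\mathrm{I}_{\upalpha}(\sigma)\cap\mathrm{L}\neq\varnothing$, and at the empty simplex
\[
\mathbf{W}^{\mathbb{S}}_{\upalpha,\mathrm{L}}(\hat{\mathbf{0}})=\bigwedge_{i\in\mathrm{L}}S^{d_i}\ \cong\ S^{\sum_{i\in\mathrm{L}}d_i}.
\]

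Next I would invoke the stable splitting of~\cite[Theorem 2.8]{BBCG10} applied to the product/join factors of $\mathbf{D}^{\mathbb{S}}_{\upalpha}(\sigma)$ to obtain a natural pointed homotopy equivalence
\[
\mathbf{\Sigma}\big(\mathbf{D}^{\mathbb{S}}_{\upalpha}(\sigma)\big)\ \simeq\ \mathbf{\Sigma}\Big(\bigvee_{\mathrm{L}\subset[k]}\mathbf{W}^{\mathbb{S}}_{\upalpha,\mathrm{L}}(\sigma)\Big),
\]
functorial in $\sigma\in\mathrm{Cat}(\K)$. Passing to colimits (and using that suspension commutes with colimits up to homotopy, as in Section~4), and then applying the analogue of Lemma~\ref{Lem:Wedge-Equiv} with $S^0$ replaced by $\mathbb{S}$ — the only input needed is that the inclusion $\bigwedge_{i\in\mathrm{L}}S^{d_i}\hookrightarrow \mathbf{W}^{\mathbb{S}}_{\upalpha,\mathrm{L}}(\sigma)$ is null-homotopic for $\sigma\neq\hat{\mathbf{0}}$ in $\K_{\upalpha,\mathrm{L}}$, which follows from the contractibility of $S^{d_i}*(\sigma\cap\Delta^{\alpha_i})$ as above — yields the displayed stable splitting, since all wedge summands with $\sigma\neq\hat{\mathbf{0}}$ collapse and the surviving term involves $|\Delta((\K_{\upalpha,\mathrm{L}})_{>\hat{\mathbf{0}}})|*\mathbf{W}^{\mathbb{S}}_{\upalpha,\mathrm{L}}(\hat{\mathbf{0}})\simeq |\K_{\upalpha,\mathrm{L}}|*S^{\sum_{i\in\mathrm{L}}d_i}$.

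The main technical obstacle is verifying that the commutative square~\eqref{Equ:Wedge-Equiv} underlying Lemma~\ref{Lem:Wedge-Equiv} goes through unchanged with $S^0$ replaced by $S^{d_i}$; this amounts to checking that the relevant inclusions into joins are null-homotopic and that the homotopies can be chosen functorially over $\mathrm{Cat}(\K_{\upalpha,\mathrm{L}})$. Both facts are immediate because the joins in question are contractible, so the existing BBCG machinery applies verbatim. Once the stable splitting is in place, the reduced suspension isomorphism gives
\[
\widetilde{H}^{q}\big(X(\K,\upalpha,\mathbb{S});\mathbf{k}\big)\ \cong\ \bigoplus_{\mathrm{L}\subset[k]}\widetilde{H}^{q-\sum_{i\in\mathrm{L}}d_i-1}\big(\K_{\upalpha,\mathrm{L}};\mathbf{k}\big),
\]
which, combined with the convention $\widetilde{H}^{-1}(\K_\varnothing;\mathbf{k})=\mathbf{k}$ accounting for $H^0$, proves Theorem~\ref{Thm:Main-1-gen}. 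Theorem~\ref{Thm:Main-1} is then recovered by setting $d_i=0$ or $d_i=1$ for all $i$.
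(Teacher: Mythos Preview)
Your proposal is correct. However, it takes a different route from the one the paper signals for Theorem~\ref{Thm:Main-1-gen}. The paper's hint (``we need to modify our definition of $\kappa(i,\mathrm{L})$ and $\kappa(\sigma,\mathrm{L})$\ldots'') points to adapting the direct cellular computation of Section~3: one builds a cell decomposition of $X(\K,\upalpha,\mathbb{S})$ parallel to $\widehat{\mathscr{B}}_{\upalpha}(\K)$, writes down the cochain differential as in~\eqref{Equ:Diff-complex-2}, and defines explicit chain isomorphisms $\widehat{\varphi}^{\mathrm{L}}_{\upalpha}$ from $C^*(\K_{\upalpha,\mathrm{L}};\mathbf{k})$ to the $\mathrm{L}$-graded piece, with the sign $\kappa(i,\mathrm{L})$ redefined as $(-1)^{r_{\mathbb{S}}(i,\mathrm{L})}$ where $r_{\mathbb{S}}(i,\mathrm{L})=\sum_{j\in\mathrm{L},\,j<i}d_j$. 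You instead run the BBCG stable-decomposition argument of Section~4, which is exactly what the paper packages separately as Theorem~\ref{Thm:Stable-Decomp-gen}, and then read off cohomology. Your route is cleaner for the additive statement since it sidesteps all sign bookkeeping; the paper's cellular route has the advantage of producing explicit cochain-level maps, which are what feed into the later ring-structure results (Theorems~\ref{Thm:Main-2-gen} and~\ref{Thm:Main-3-gen}) via the analogues of $\psi_{\upalpha}$ and $\widehat{\psi}_{\upalpha}$.
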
 
      
       \begin{thm}\label{Thm:Stable-Decomp-gen}
            There is a homotopy equivalence
         $$ 
           \mathbf{\Sigma}( X(\K, \upalpha,\underline{\mathbb{S}}) )
            \simeq \bigvee_{\mathrm{L}\subset [k]} 
           \mathbf{\Sigma}^{(\sum_{i\in \mathrm{L}} d_i) +2}( \K_{\upalpha,\mathrm{L}}).$$
       \end{thm}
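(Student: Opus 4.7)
The plan is to mimic the proof of Theorem~\ref{Thm:Stable-Decomp} essentially verbatim, with $S^{d_i}_{(i)}$ playing the role previously played by $S^0_{(i)}$ or $S^1_{(i)}$. First I would define, for each $\mathrm{L}\subset[k]$ and each $\sigma\in\K$, the smash factor
\[ \mathbf{W}^{\mathbb{S}}_{\upalpha,\mathrm{L}}(\sigma) := \bigwedge_{i\in \mathrm{I}_{\upalpha}(\sigma)\cap\mathrm{L}} S^{d_i}_{(i)}*(\sigma\cap\Delta^{\alpha_i}) \wedge \bigwedge_{i\in\mathrm{L}\setminus(\mathrm{I}_{\upalpha}(\sigma)\cap\mathrm{L})} S^{d_i}_{(i)}, \]
together with the product blocks $\mathbf{D}^{\mathbb{S}}_{\upalpha}(\sigma) := \prod_{i\in \mathrm{I}_{\upalpha}(\sigma)} S^{d_i}_{(i)}*(\sigma\cap\Delta^{\alpha_i}) \times \prod_{i\in[k]\setminus\mathrm{I}_{\upalpha}(\sigma)} S^{d_i}_{(i)}$, so that $X(\K,\upalpha,\mathbb{S}) = \mathrm{colim}_{\sigma\in\mathrm{Cat}(\K)} \mathbf{D}^{\mathbb{S}}_{\upalpha}(\sigma)$. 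I would then verify the obvious analog of Lemma~\ref{Lem:W-Simplex-Equiv}: namely that $\mathbf{W}^{\mathbb{S}}_{\upalpha,\mathrm{L}}(\sigma)$ depends only on $\sigma\cap\K_{\upalpha,\mathrm{L}}$, is contractible whenever $\mathrm{I}_{\upalpha}(\sigma)\cap\mathrm{L}\neq\varnothing$ (since $S^{d_i}*(\sigma\cap\Delta^{\alpha_i})$ is a join with a simplex and hence contractible, killing the smash product), and evaluates on the empty simplex to
\[ \mathbf{W}^{\mathbb{S}}_{\upalpha,\mathrm{L}}(\hat{\mathbf{0}}) = \bigwedge_{i\in\mathrm{L}} S^{d_i}_{(i)} \cong S^{\sum_{i\in\mathrm{L}} d_i}. \]

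Next, after pinning down basepoints in each $S^{d_i}$, I would invoke the stable splitting of a Cartesian product from \cite[Theorem~2.8]{BBCG10} to obtain functorial pointed homotopy equivalences
\[ \mathbf{\Sigma}(\mathbf{D}^{\mathbb{S}}_{\upalpha}(\sigma)) \simeq \mathbf{\Sigma}\Big(\bigvee_{\mathrm{L}\subset[k]} \mathbf{W}^{\mathbb{S}}_{\upalpha,\mathrm{L}}(\sigma)\Big) \]
over the face category $\mathrm{Cat}(\K)$. Since suspension commutes with colimits (\cite[Theorem~4.3]{BBCG10}), passing to colimits gives
\[ \mathbf{\Sigma}(X(\K,\upalpha,\mathbb{S})) \simeq \mathbf{\Sigma}\Big(\bigvee_{\mathrm{L}\subset[k]} X^{\wedge}(\K_{\upalpha,\mathrm{L}},\mathbb{S})\Big), \]
where $X^{\wedge}(\K_{\upalpha,\mathrm{L}},\mathbb{S}) := \mathrm{colim}_{\sigma\in\K_{\upalpha,\mathrm{L}}}\, \mathbf{W}^{\mathbb{S}}_{\upalpha,\mathrm{L}}(\sigma)$. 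Then I would prove the analog of Lemma~\ref{Lem:Wedge-Equiv} by constructing self-equivalences $H_{\mathrm{L}}(\sigma)$ fitting into the diagram~\eqref{Equ:Wedge-Equiv}; the essential input is that the inclusion $S^{d_i}\hookrightarrow S^{d_i}*\Delta^r$ is null-homotopic (its target is a join with a contractible space), which is true for every $d_i\geq 0$. Applying \cite[Theorems~4.1--4.2]{BBCG10} then yields
\[ X^{\wedge}(\K_{\upalpha,\mathrm{L}},\mathbb{S}) \simeq \bigvee_{\sigma\in\K_{\upalpha,\mathrm{L}}} |\Delta((\K_{\upalpha,\mathrm{L}})_{>\sigma})| * \mathbf{W}^{\mathbb{S}}_{\upalpha,\mathrm{L}}(\sigma). \]

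By the contractibility clause of the generalized Lemma~\ref{Lem:W-Simplex-Equiv}, every summand with $\sigma\neq\hat{\mathbf{0}}$ collapses, so only the $\sigma=\hat{\mathbf{0}}$ term survives; since $\Delta((\K_{\upalpha,\mathrm{L}})_{>\hat{\mathbf{0}}})$ is the barycentric subdivision of $\K_{\upalpha,\mathrm{L}}$, this term is $|\K_{\upalpha,\mathrm{L}}|*S^{\sum_{i\in\mathrm{L}} d_i} \simeq \mathbf{\Sigma}^{\sum_{i\in\mathrm{L}} d_i+1}(|\K_{\upalpha,\mathrm{L}}|)$. One further suspension produces the desired wedge $\bigvee_{\mathrm{L}\subset[k]} \mathbf{\Sigma}^{(\sum_{i\in\mathrm{L}} d_i)+2}(\K_{\upalpha,\mathrm{L}})$. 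The main obstacle is really only bookkeeping: uniformly handling the case $d_i = 0$ (where $S^{d_i}$ is disconnected, but the null-homotopy and basepoint arguments above still apply), and checking that the diagram~\eqref{Equ:Wedge-Equiv} commutes strictly rather than merely up to homotopy, so that \cite[Theorems~4.1--4.2]{BBCG10} applies. Once these technical points are verified, the argument is formally identical to the proof of Theorem~\ref{Thm:Stable-Decomp}.
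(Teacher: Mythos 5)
Your proposal is correct and is exactly the argument the paper intends: for Theorem~\ref{Thm:Stable-Decomp-gen} the paper explicitly leaves the proof to the reader as being parallel to the proof of Theorem~\ref{Thm:Stable-Decomp} in Section~4, and your substitution of $S^{d_i}_{(i)}$ for $S^0_{(i)}$/$S^1_{(i)}$, together with the observations that $\bigwedge_{i\in\mathrm{L}}S^{d_i}_{(i)}\cong S^{\sum_{i\in\mathrm{L}}d_i}$ and that $S^{d_i}\hookrightarrow S^{d_i}*\Delta^r$ is null-homotopic for every $d_i\geq 0$, is precisely the required adaptation. The bookkeeping points you flag (basepoints, strict commutativity of the diagram~\eqref{Equ:Wedge-Equiv}) are handled the same way as in the $S^0$/$S^1$ cases, so nothing further is needed.
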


     The proofs of the above two theorems are completely 
     parallel to the proofs of Theorem~\ref{Thm:Main-1} and
  Theorem~\ref{Thm:Stable-Decomp}. So we leave them to the reader.
  There is only one technical point in the proof here.  
   The definition of $\kappa(i,\mathrm{L})$ and $\kappa(\sigma,\mathrm{L})$ 
   (see~\eqref{Equ:Kappa}) in the proof of Theorem~\ref{Thm:Main-1}. 
  need to be modified to be adapted 
   to Theorem~\ref{Thm:Main-1-gen}.
   For $X(\K, \upalpha,\underline{\mathbb{S}})$ we should
    redefine $\kappa(i,\mathrm{L})$  as follows and adjust
     the definition of $\kappa(\sigma,\mathrm{L})$ accordingly.
     $$\qquad\ \ \ \kappa(i,\mathrm{L})= (-1)^{r_{\underline{\mathbb{S}}}(i,\mathrm{L})}, \ \text{where}\
     r_{\underline{\mathbb{S}}}(i,\mathrm{L})= \sum_{j\in \mathrm{L},\, j< i} d_j, \ 
     \forall i\in \mathrm{L} \subset [k]. 
        $$

 \begin{rem}
 When $\upalpha^*$ is the trivial partition of
 $V(\K)$, $X(\K, \upalpha^*,\underline{\mathbb{S}})$ is nothing but the 
 polyhedral product $\K^{(\underline{\mathbb{D}},\underline{\mathbb{S}})}$ where
 $(\underline{\mathbb{D}},\underline{\mathbb{S}})= \{ (D^{d_1+1},S^{d_1}),\cdots, (D^{d_k+1},S^{d_k}) \}$.
 In this special case, Theorem~\ref{Thm:Main-1-gen} coincides 
   with~\cite[Theorem 4.2]{LuZhi09}.\\
  \end{rem}

\end{document}